\newtheorem{theorem}{Theorem}[section]
\newtheorem{corollary}{Corollary}[section]
\newtheorem{lemma}{Lemma}[section]
\theoremstyle{definition}
\theoremstyle{definition}
\newtheorem{remark}{Remark}[section]
\theoremstyle{definition}
\newcommand{\diam}{{\rm diam}}
\newcommand{\dist}{{\rm dist}}
\title[]{Quantitative results for fractional overdetermined problems in exterior and annular sets}
\author{Giulio Ciraolo}
\address{G. Ciraolo. Dipartimento di Matematica "Federigo Enriques",
Universit\`a degli Studi di Milano, Via Cesare Saldini 50, 20133 Milano, Italy}
\email{giulio.ciraolo@unimi.it}
\author{Luigi Pollastro}
\address{L. Pollastro. Dipartimento di Matematica "Federigo Enriques",
Universit\`a degli Studi di Milano, Via Cesare Saldini 50, 20133 Milano, Italy}
\email{luigi.pollastro@unimi.it}
\begin{document}

\begin{abstract}
We consider overdetermined problems related to the fractional capacity. In particular we study $s$-harmonic functions defined in unbounded exterior sets or in bounded annular sets, and having a level set parallel to the boundary.
We first classify the solutions of the overdetermined problems, by proving that the domain and the solution itself are radially symmetric. Then we prove a quantitative stability counterpart of the symmetry results: we assume that the overdetermined condition is slightly perturbed and we measure, in a quantitative way, how much the domain is close to a symmetric set.
\end{abstract}

\maketitle

\section{Introduction}
In the present paper we prove quantitative symmetry results for overdetermined problems involving the fractional Laplacian in unbounded exterior sets or bounded annular sets. These problems originate from the study of capacity of a set and relative capacity which, in the classical setting, are given by
\begin{equation*}
    {\rm cap}(\Omega) := \inf \left\{ \frac12 \int_{\mathbb{R}^n}|\nabla v|^2 dx\ :\ v\in C^{\infty}_c(\mathbb{R}^n),\,\,v{|_\Omega} \geq 1 \right\}\,,
\end{equation*}
and
\begin{equation*}
{\rm cap}(\Omega;D) := \inf \left\{ \frac{1}{2} \int_{D} |\nabla v|^2 dx:\ v \in C_c^\infty (\Omega),\ v_{|_D} \geq 1\right\},
\end{equation*}
respectively; here $D$ and $\Omega$ are bounded open sets, with $\overline D \subset \Omega \subset \mathbb{R}^n$, $n \geq 3$, and $\nabla v$  is the gradient of the function $v$.

Instead of the classical notion of capacity, in this paper we consider the capacity in a fractional setting. For a parameter $s \in (0,1)$, the $\textit{fractional capacity of order s}$ (or $\textit{s-capacity}$) of the set $\Omega$ is defined as follows:
\begin{equation} \label{caps_def}
    {\rm cap}_s(\Omega) := \inf\{ [v]_s^2 \ | \ v \in C_c^\infty (\mathbb{R}^n),\ v_{|_\Omega} \geq 1\} \,,
\end{equation}
where $[v]_s$ is the $\textit{Gagliardo seminorm}$ of $v$ which is defined by
\begin{equation*}
    [v]_s^2 := \int_{\mathbb{R}^{2n}} \frac{|v(x)-v(y)|^2}{|x-y|^{n+2s}} dx dy \,.
\end{equation*}
Analogously, one can define the $\textit{relative fractional capacity of order s}$ of the couple of sets $(\Omega,D)$ by
\begin{equation} \label{caps_rel_def}
    {\rm cap}_s(\Omega;D) := \inf\{ [v]_s^2 \ | \ v \in C_c^\infty (\Omega),\ v_{|_D} \geq 1\} \,.
\end{equation}

The Euler-Lagrange equations associated to \eqref{caps_def} and \eqref{caps_rel_def} are both related to the so-called fractional Laplacian of order $s$ (or $s$-Laplacian), which is denoted by $(-\Delta)^s$ and it is given by  
\begin{equation*}
    (-\Delta)^s u(x) := c_{n,s} \, P.V. \int_{\mathbb{R}^n} \frac{u(x) - u(z)}{|x-z|^{n+2s}} dz,
\end{equation*} 
for $u \in C^{\infty}_c (\mathbb{R}^n)$, where
\begin{equation} \label{cns}
    c_{n,s} = s \, (1-s) \, 4s \pi^{-n/2} \frac{\Gamma(n/2 + s)}{\Gamma(2-s)} 
\end{equation}
(see for example \cite{di2012hitchhiker}). 
It can be proved that ${\rm cap}_s(\Omega)$ and  ${\rm cap}_s(\Omega;D)$ are uniquely achieved by two functions $u_\Omega,\, u_{\Omega,D} \in  H^s(\mathbb{R}^n)$ which satisfy
\begin{equation}
\label{a2s1eq10}
    \begin{cases}
    (-\Delta)^s u_\Omega = 0 \qquad &\text{in} \ \mathbb{R}^n \setminus \overline\Omega,\\
    u_\Omega = 1 \qquad &\text{in} \ \overline \Omega,\\
    u_\Omega(x) \to 0 \qquad &\text{as} \ |x| \to +\infty\,,
    \end{cases}
\end{equation}
and
\begin{equation}
\label{a2s1eq12}
    \begin{cases}
    (-\Delta)^s u_{\Omega,D} = 0 \qquad &\text{in} \ A:= \Omega \setminus \overline D,\\
    u_{\Omega,D} = 1 \qquad &\text{in} \ \overline D,\\
    u_{\Omega,D} = 0 \qquad &\text{in} \ \mathbb R^n \setminus \Omega \,,
    \end{cases}
\end{equation}
respectively. The function $u_\Omega$ is sometimes called the \textit{$s$-capacitary potential}.

Overdetermined problems for \eqref{a2s1eq10} and \eqref{a2s1eq12} have been considered in \cite{soave2019overdetermined} where the overdetermined condition is given on the normal \emph{$s$-derivative} at the boundary, which is assumed to be constant in the spirit of Serrin's overdetermined problem. 

In this paper we consider a somehow discrete version of Serrin's overdetermined condition, and we instead assume that the solution is constant on a surface parallel to the boundary.\footnote{Regarding problem \eqref{a2s1eq12}, it is more precise to say that the solution is constant on each connected component of the parallel surface, see Theorem \ref{a2theorem3} below.} In this setting, our main results can be considered as the generalization of the results in \cite{ciraolo2021symmetry} to exterior and annular domains.

In order to clearly state our results, we recall that the Minkowski sum of two sets $A$ and $B$ is defined by
$$
A+B = \{ x + y \ | \ x \in A \ \, y \in B \}.
$$

%

Our first result deals with solutions of problem \eqref{a2s1eq10} with the overdermining assumption that the solution is constant on a surface parallel to $\partial \Omega$.

%

\begin{theorem}
\label{a2s1theorem1}
Let $\Omega$ be a bounded domain in $\mathbb{R}^n$. Let $R>0$ and assume that  $G:= \Omega + B_R$ is such that $\partial G$ of class $C^1$. Then, there exists a solution $u \in H^s(\mathbb{R}^n) \cap C(\mathbb{R}^n)$ of \eqref{a2s1eq10} such that 
\begin{equation}\label{a2s1eq16}
    u = c \qquad \text{on} \ \partial G
\end{equation}
for some constant $c$ if and only if $G$ and $\Omega$ are concentric balls and $u$ is radially symmetric.
\end{theorem}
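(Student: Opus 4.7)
The natural plan is to prove symmetry by the method of moving planes adapted to the fractional Laplacian in exterior domains, exploiting the level-set condition $u = c$ on $\partial G$ as a substitute for Serrin's gradient overdetermined condition. Because $\partial G$, but not necessarily $\partial\Omega$, is of class $C^1$, I would run the procedure relative to $G$ and reduce to the ``parallel'' tangency configuration.

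Fix a direction $e \in \mathbb{S}^{n-1}$; for $\lambda \in \mathbb{R}$ let $T_\lambda = \{x \cdot e = \lambda\}$, $\Sigma_\lambda = \{x\cdot e > \lambda\}$, $R_\lambda$ the reflection across $T_\lambda$, and set $w_\lambda(x) := u(R_\lambda x) - u(x)$, which is antisymmetric across $T_\lambda$ and satisfies $(-\Delta)^s w_\lambda = 0$ in $\mathbb R^n \setminus (\overline\Omega \cup R_\lambda(\overline\Omega))$. For $\lambda$ so large that $R_\lambda(\overline G) \cap \overline G = \emptyset$, one checks $w_\lambda \geq 0$ in $\Sigma_\lambda$: indeed $w_\lambda = 1 - u > 0$ on $R_\lambda(\overline\Omega) \subset \Sigma_\lambda$, $w_\lambda = 0$ on $T_\lambda$ by antisymmetry, $w_\lambda(x) \to 0$ as $|x| \to \infty$, so the weak maximum principle for $s$-harmonic functions in unbounded sets (with the decay at infinity playing the role of a boundary condition) yields the claim. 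Define
\[
\lambda_* := \inf\{\mu\in\mathbb R : w_\lambda \geq 0 \text{ in } \Sigma_\lambda \text{ for all } \lambda > \mu\}.
\]
At $\lambda_*$ the standard geometric dichotomy holds: either (i) $R_{\lambda_*}(G \cap \Sigma_{\lambda_*})$ is internally tangent to $G$ at some $p \in \partial G \setminus T_{\lambda_*}$, whose reflection $R_{\lambda_*}(p)$ also lies on $\partial G$; or (ii) $T_{\lambda_*}$ is orthogonal to $\partial G$ at some $p \in \partial G \cap T_{\lambda_*}$.

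The decisive step uses the overdetermined condition. In case (i), since both $p$ and $R_{\lambda_*}p$ belong to $\partial G$, we have $u(p) = u(R_{\lambda_*}p) = c$, hence $w_{\lambda_*}(R_{\lambda_*}p) = 0$; moreover $\dist(R_{\lambda_*}p, \overline\Omega) = \dist(p, \overline\Omega) = R > 0$, so the vanishing occurs at an interior point of the open set where $w_{\lambda_*}$ is $s$-harmonic. The strong maximum principle for antisymmetric $s$-harmonic functions (in the spirit of \cite{soave2019overdetermined}) then forces $w_{\lambda_*} \equiv 0$ in $\Sigma_{\lambda_*}$, so that $u$, $\Omega$, and $G$ are invariant under $R_{\lambda_*}$. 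Case (ii) is dispatched either by a bespoke nonlocal corner-type lemma for antisymmetric $s$-harmonic functions, or, more robustly, by perturbing $e$ to nearby directions in which only (i) occurs and passing to the limit. Since $e$ was arbitrary, $G$ and $\Omega$ are symmetric across hyperplanes in every direction through a common point, hence concentric balls, and $u$ is radial. Conversely, if $G$ and $\Omega$ are concentric balls the radial $s$-capacitary potential is automatically constant on every concentric sphere, so the converse direction is immediate.

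The principal obstacle I foresee is twofold. First, making the nonlocal maximum principles fully rigorous in this exterior, unbounded setting — in particular, justifying that the decay $u(x) \to 0$ at infinity properly substitutes for a boundary condition on the unbounded component of $\Sigma_\lambda \setminus R_\lambda(\overline\Omega)$, and setting up the strong antisymmetric maximum principle around the obstacle $\overline\Omega \cup R_\lambda(\overline\Omega)$ (which may have low boundary regularity on the $\Omega$ side). Second, the orthogonal-tangency configuration (ii) is genuinely delicate, since Serrin's classical corner lemma has no off-the-shelf nonlocal counterpart adapted to parallel-surface conditions in exterior domains; the cleanest way out is likely the direction-perturbation argument mentioned above.
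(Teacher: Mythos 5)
Your Case 1 argument is essentially the paper's: at a tangency point $P$ with both $P$ and its reflection on $\partial G$, the overdetermined condition forces $v(P)=0$ at an interior point of the region where $v$ is $s$-harmonic (since $\dist(P,\overline\Omega)=R>0$), and the strong maximum principle for antisymmetric $s$-harmonic functions yields $v\equiv 0$. Your setup of the moving planes, the antisymmetric difference function, and the weak maximum principle on the unbounded complement all match the paper's scheme (Lemmas \ref{a2s1lemma1} and \ref{lemma_SMP}); in particular there is no separate ``decay at infinity'' issue to resolve, since in the $H^s(\mathbb{R}^n)$ variational setting the weak maximum principle is obtained directly by testing with $u_-\chi_{H^+}$ and does not rely on compactness of the exterior region.

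The genuine gap is Case 2, the orthogonal-tangency configuration, which you explicitly leave open. Neither of your two suggested remedies holds up. There is no need for a ``bespoke nonlocal corner lemma,'' and the direction-perturbation idea does not work in general: it is not true that Case 2 directions can always be avoided or exhausted as limits of Case 1 directions, and even if it were, one would need uniform estimates to pass to the limit, which is exactly what is missing. The paper resolves Case 2 cleanly with a tool you do not invoke: the boundary Harnack inequality for antisymmetric $s$-harmonic functions (Lemma \ref{a2s1lemma5}, quoted from \cite{ciraolo2021symmetry}). Since the critical point $Q\in T\cap\partial G$ is at distance $\geq R$ from $\overline\Omega$, the antisymmetric function $v$ is $s$-harmonic in a full ball around $Q$, and the Harnack estimate \eqref{s3eq2} forces $v(x_t)/(-t)$ to stay bounded away from zero as $t\to 0^+$, i.e.\ $\partial_1 v(Q)\neq 0$; this contradicts $\partial_1 v(Q)=2\partial_1 u(Q)=0$, which follows from $u\equiv c$ on $\partial G$ together with $e_1$ being tangent to $\partial G$ at $Q$ (note $u$ is smooth near $Q$ since $Q$ is far from $\partial\Omega$). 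Without this Harnack-type input, your proof does not close in Case 2 and the result is not established.
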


We will prove Theorem \ref{a2s1theorem1} by using the method of moving planes. Once symmetry is established, one can investigate the quantitative stability result for Theorem \ref{a2s1theorem1}. The idea is to assume that the overdetermined condition \eqref{a2s1eq16} is replaced by a weaker condition which implies that the solution is close to a constant on $\partial G$. In this direction, it is useful to consider the Lipschitz seminorm $[u]_{\Gamma}$ of $u$ on $\Gamma=\partial G$, which is given by
\begin{equation*}
    [u]_{\Gamma} := \sup_{x,y \in \Gamma, \, x \neq y} \frac{|u(x) - u(y)|}{|x-y|}
\end{equation*}
and the parameter
\begin{equation}\label{def:outradius-inradius}
    \rho (\Omega) := \inf \{ |t - s| \ | \ \exists p \in \Omega \ \mathrm{such \ that} \ B_s(p) \, \subset \Omega \subset B_t(p) \} \,,
\end{equation}
which controls how much the set $\Omega$ differs from a ball (clearly, $\rho(\Omega) = 0$ if and only if $\Omega$ is a ball). 

Another relevant quantity which we need to quantify the stability results is the radius of the touching ball condition. More precisely, given a set $E$ we denote the optimal exterior and interior radii in the touching ball condition by $\mathfrak{r}_{E}^e$ and $\mathfrak{r}_{E}^i$, respectively.
 
Hence, our main goal is to obtain quantitative bounds on $\rho (\Omega)$ in terms of $[u]_{\partial G}$, as done in the following theorem.

\begin{theorem}
\label{a2theorem2}
Let $\Omega$ be a bounded domain of $\mathbb{R}^n$ with $\partial \Omega$ of class $C^2$. Let $R>0$ and let $G =\Omega + B_R$ be such that $\partial G$ is of class $C^2$. Let $u \in C^s(\mathbb{R}^n)$ be a solution of \eqref{a2s1eq10}. Then, we have that
\begin{equation}
\label{a2s1eq8}
\rho (\Omega) \leq C_* \, [u]_{\partial G}^{\frac{1}{s +2}} ,
\end{equation}
with $C_* =C_*(n,s,R,\mathrm{diam}(\Omega),|\Omega|,\mathfrak{r}_{\Omega}^e)> 0$, where $\mathrm{diam}(\Omega)$ and $|\Omega|$ denote the diameter and the volume of $\Omega$, respectively, and $\mathfrak{r}_{\Omega}^e$ is the radius of the exterior touching ball condition at $\Omega$. 
\end{theorem}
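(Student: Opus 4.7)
The plan is to carry out a quantitative version of the moving planes argument used in Theorem~\ref{a2s1theorem1}: in place of the rigid identity $u\equiv u_{\lambda^*}$ obtained at the critical position in the symmetric case, I would track how much of that identity survives when \eqref{a2s1eq16} is weakened to $\eta:=[u]_{\partial G}\ll 1$, and then convert the deficit into an upper bound on $\rho(\Omega)$. The overall scheme is the one used for the parallel surface Serrin problem in the classical setting, equipped here with the nonlocal tools needed for the fractional Laplacian, namely a quantitative fractional Hopf lemma and a fractional boundary Harnack inequality.

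For each direction $e\in\mathbb S^{n-1}$ I would run the moving planes procedure on $G$ up to the critical value $\lambda^*=\lambda^*(e)$ at which the reflected cap $G^\sharp_{\lambda^*}$ first touches $\partial G$ at an internal tangency point $P$ (or $T_{\lambda^*}$ becomes orthogonal to $\partial G$). Setting
\begin{equation*}
v(x):=u(x_{\lambda^*})-u(x),\qquad x\in\Sigma_{\lambda^*}:=\{x\cdot e>\lambda^*\},
\end{equation*}
where $x_{\lambda^*}$ is the reflection of $x$ across $T_{\lambda^*}$, the function $v$ is antisymmetric across $T_{\lambda^*}$ and $s$-harmonic at every $x\in\Sigma_{\lambda^*}$ with $\{x,x_{\lambda^*}\}\cap\overline\Omega=\emptyset$. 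Fixing $c_0:=u(P)$, the Lipschitz hypothesis yields $|u-c_0|\leq\eta\,\mathrm{diam}(G)$ on $\partial G$; and for $x\in\partial G\cap\Sigma_{\lambda^*}$, the reflected point $x_{\lambda^*}\in\overline G$ lies on $\partial G^\sharp_{\lambda^*}$, which is $C^2$-tangent to $\partial G$ at $P$, so $\mathrm{dist}(x_{\lambda^*},\partial G)\leq C|x-P|^2$. Combining this with the $C^s(\mathbb R^n)$ regularity of $u$ from the hypothesis gives
\begin{equation*}
|v(x)|\leq C\bigl(\eta+|x-P|^{2s}\bigr),\qquad x\in\partial G\cap\Sigma_{\lambda^*},
\end{equation*}
while $v=0$ on $T_{\lambda^*}$ and $v\leq 0$ on $\partial\Omega\cap\Sigma_{\lambda^*}$ by construction.

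To turn this into a bound on $\rho(\Omega)$ I would argue by contradiction, assuming $\rho(\Omega)\geq\tau>0$. The $C^2$ regularity of $\partial G$ together with the exterior touching ball condition of radius $\mathfrak r_\Omega^e$ allow one to find a direction $e$ for which $G\setminus\overline{G^\sharp_{\lambda^*(e)}}$ contains a ball of radius $\asymp\tau$, centred at a reference point $x_*$ at distance $\asymp\tau$ from $\partial G$, on which $v\geq 0$. Applying a quantitative fractional Hopf lemma to $v$ inside this ball, combined with a fractional boundary Harnack inequality transporting the resulting $\tau^s$ decay across the lens-shaped cap separating $\partial G$ and $\partial G^\sharp_{\lambda^*}$ (whose tangential extent is $\asymp\sqrt\tau$ by the $C^2$ quadratic flatness at $P$), one obtains a lower bound $v(x_*)\geq c_1\tau^{s+2}$. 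On the other hand, the boundary smallness of the previous paragraph together with the maximum principle in the harmonicity region produces the matching upper bound $v(x_*)\leq C_2\eta$. Comparing the two gives $\tau^{s+2}\leq C\eta$, and a standard centring argument over $e\in\mathbb S^{n-1}$ promotes this to the stated global bound $\rho(\Omega)\leq C_*\eta^{1/(s+2)}$.

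The main obstacle will be the last step: producing a fractional Hopf lemma that is quantitative in the gap size $\tau$, and combining it with a boundary Harnack estimate on lens-shaped domains whose diameter and thickness both scale polynomially with $\tau$. Matching the nonlocal $t^s$ decay near the boundary with the $C^2$ quadratic flatness of the two parallel surfaces at the tangent point is precisely what is responsible for the exponent $1/(s+2)$; carrying this out rigorously while keeping track of the nonlocal tail contributions in the comparison for $v$, and of the dependence of all constants on the fixed geometric data $R,\,\mathrm{diam}(\Omega),\,|\Omega|,\,\mathfrak r_\Omega^e$, is where most of the technical work will go.
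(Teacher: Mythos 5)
Your overall scheme (moving planes for $G$, compare $u$ to its reflection, quantitative Hopf plus boundary Harnack, and match the nonlocal $t^s$ boundary decay against the $C^2$ flatness) is in the right spirit, but several of the load-bearing steps do not hold up, and they are precisely the steps the paper handles differently.

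First, you claim that $\rho(\Omega)\geq\tau$ produces a direction $e$ for which the defect region $G\setminus\overline{G^\sharp_{\lambda^*(e)}}$ contains a ball of radius $\asymp\tau$ at distance $\asymp\tau$ from $\partial G$. This is not justified, and it is not true in general: the defect at a single critical position can be a thin crescent whose measure and inradius are both much smaller than $\rho(\Omega)$. The paper sidesteps this entirely. It never looks for a ball of radius $\tau$; instead it works with the \emph{measure} of the set $\Omega^-\setminus\mathcal Q(\widehat\Omega)$ (note: the defect of $\Omega$, not of $G$), proving in Lemma \ref{a2s3lemma1} that
\begin{equation*}
|\Omega^-\setminus\mathcal Q(\widehat\Omega)|\ \leq\ \tilde C\bigl(\delta^{-(1+s)}v(P)+\delta\bigr)
\end{equation*}
for all small $\delta>0$. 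The crucial inputs are (i) the boundary growth estimate of Lemma \ref{lemma_conticino}, $1-u\gtrsim\operatorname{dist}(\cdot,\partial\Omega)^s$, which gives $\inf_{K_\delta}v\gtrsim\delta^s$ on the part of the defect set at distance $\geq\delta$ from $\partial\Omega$, and (ii) the structure of the quantitative antisymmetric Hopf lemma (Lemma \ref{s3lemma2}), whose lower bound is proportional to $\operatorname{dist}(K,H^+)\cdot|K|\cdot\inf_K v$. Taking $K_\delta$ to be almost all of the defect set gives $|K_\delta|\gtrsim|\Omega^-\setminus\mathcal Q(\widehat\Omega)|-c\delta$, and combining with $v(P)\lesssim[u]_{\partial G}$ (from the Lipschitz deficit plus the boundary Harnack transfer) and then optimizing in $\delta$ is what produces the exponent $1/(s+2)$. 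You never mention the capacitary boundary estimate nor an optimization in an auxiliary parameter, and your heuristic derivation of $v(x_*)\geq c_1\tau^{s+2}$ is not coherent: if you feed a ball $K$ of radius $\tau$ into the Hopf lemma you would pick up a factor $|K|\asymp\tau^n$ and land at an exponent $n+s+1$, not $s+2$.

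Second, even granting a bound of the form $|\Omega^-\setminus\mathcal Q(\widehat\Omega)|\lesssim[u]_{\partial G}^{1/(s+2)}$ for each direction, you still need to convert this measure smallness into a bound on $\rho(\Omega)$. The paper does this through the slicing argument \eqref{a2s3eq50}--\eqref{a2s3eq53}: it slices $\Omega$ into layers of width $2\lambda_e$, shows $m_k\leq m_1\leq(n+3)\,|\Omega^-\setminus\mathcal Q(\widehat\Omega)|$ for every slice, deduces $|\lambda_e|\lesssim[u]_{\partial G}^{1/(s+2)}$, and finally picks the direction $e=(x-y)/|x-y|$ with $|x|=\rho_{\min}$, $|y|=\rho_{\max}$ to relate $\rho(\Omega)$ to $2|\lambda_e|$. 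This slicing step is absent from your proposal and is exactly what replaces your unjustified ``find a ball of radius $\tau$'' step. To repair your argument you would need to adopt essentially this measure-based route.
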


In the second part of the article we consider an overdetermined problem involving \textit{annular sets}. More precisely, let $D,\Omega \subset \mathbb{R}^n$ be bounded open domains such that $\overline D \subset \Omega$, set
\begin{equation} \label{A_def}
A:= \Omega \setminus \overline{D} \,,
\end{equation}
and we consider solutions to \eqref{a2s1eq12}.
It is clear that, since $\partial \Omega$ and $\partial D$ do not touch, we have that 
\begin{equation} \label{dbar}
\overline{d}:= \dist (\overline{D}, \mathbb{R}^n \setminus \Omega) >0 \,.
\end{equation}
By choosing a positive parameter $R < \overline{d}/2$ we have that the set 
\begin{equation} \label{Gamma_RA}
\Gamma_R^A := \{ x \in A \ | \ \dist (x,\partial A) = R \}
\end{equation}
can be written as
\begin{equation} \label{Gamma_union}
\Gamma_R^A = \Gamma_R^D \cup \Gamma_R^\Omega\,,
\end{equation}
with \footnote{Notice that $\Gamma_R^A = \partial ( (\Omega^c + B_R) \setminus (D+B_R))$.}
\begin{align*}
    \Gamma_R^D := \{ x \in A \ | \ \dist(x,\partial D) = R \},\\
    \Gamma_R^\Omega := \{ x \in A \ | \ \dist(x,\partial \Omega) = R \},
\end{align*}
with $\Gamma_R^D  \cap  \Gamma_R^\Omega = \emptyset$. On each of these hypersurfaces we assume that the solution satisfies the overdetermined condition
\begin{equation}
\label{a2s1eq15}
    \begin{aligned}
    u = \alpha \qquad &on \ \Gamma_R^D,\\
    u = \beta \qquad &on \ \Gamma_R^\Omega,
\end{aligned}
\end{equation}
where $\alpha$ and $\beta$ are two positive constants.

\bigskip

We have the following symmetry result. 

\begin{theorem}
\label{a2theorem3}
Let $A$ and $\Gamma^A_R$ be given by \eqref{A_def} and \eqref{Gamma_RA}, respectively, where $R$ is such that $\Gamma^A_R$ is of class $C^1$. 

Let $u \in H^s(\mathbb{R}^n) \cap C(\mathbb{R}^n)$ be a solution of \eqref{a2s1eq12} satisfying the overdetermined conditions \eqref{a2s1eq15}. 
Then, $D$ and $\Omega$ are concentric balls and $u$ is radially symmetric.
\end{theorem}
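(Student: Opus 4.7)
The proof proceeds by the method of moving planes, extending the argument of Theorem~\ref{a2s1theorem1} to the annular geometry. Fix $e\in\mathbb{S}^{n-1}$ and, for $\lambda\in\mathbb{R}$, set $T_\lambda = \{x\cdot e = \lambda\}$ and $H_\lambda = \{x\cdot e > \lambda\}$. Denote by $x\mapsto x_\lambda$ the reflection across $T_\lambda$, and set $u_\lambda(x) := u(x_\lambda)$, $w_\lambda := u - u_\lambda$; the function $w_\lambda$ is antisymmetric with respect to $T_\lambda$ and is $s$-harmonic wherever both $u$ and $u_\lambda$ are $s$-harmonic.

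Slide the plane by decreasing $\lambda$ from $+\infty$, and let $\bar\lambda = \bar\lambda(e)$ be the smallest value for which $(\overline\Omega\cap H_\mu)_\mu \subset \overline\Omega$ and $(\overline D\cap H_\mu)_\mu \subset \overline D$ for every $\mu\ge\lambda$. At $\bar\lambda$ one of the standard stopping alternatives occurs: an internal tangential contact of $\partial\Omega$ with its reflection at some $P\notin T_{\bar\lambda}$, or orthogonality $\nu_\Omega(P)\cdot e = 0$ at some $P\in\partial\Omega\cap T_{\bar\lambda}$; and likewise with $\partial D$ in place of $\partial\Omega$. For $\lambda>\bar\lambda$, the two inclusions guarantee that $w_\lambda\ge 0$ on $(\mathbb{R}^n\setminus H_\lambda)\setminus(A\cap A_\lambda)$ (using $u=u_\lambda=1$ on $\overline D\cap \overline D_\lambda$, $u=u_\lambda=0$ outside $\Omega\cup\Omega_\lambda$, $w_\lambda = u-0\ge 0$ in $\Omega\setminus \Omega_\lambda$, and $w_\lambda = 1-u_\lambda\ge 0$ in $\overline D\setminus\overline D_\lambda$), and $w_\lambda = 0$ on $T_\lambda$. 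The antisymmetric weak maximum principle for the fractional Laplacian (Fall--Jarohs, Jarohs--Weth) then yields $w_\lambda\ge 0$ in $A\cap A_\lambda\cap(\mathbb{R}^n\setminus H_\lambda)$, and passing to the limit $\lambda\downarrow\bar\lambda$ gives $w_{\bar\lambda}\ge 0$ in the corresponding limit set.

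The overdetermined conditions \eqref{a2s1eq15} now produce an interior zero of $w_{\bar\lambda}$. Suppose the stopping alternative at $\bar\lambda$ is an internal tangential contact on $\partial\Omega$ at $P$, and set $Q := P - R\,\nu_\Omega(P)$; then $Q\in \Gamma_R^\Omega$ and, by the tangency, also $Q\in(\Gamma_R^\Omega)_{\bar\lambda}$, so $u(Q) = \beta = u(Q_{\bar\lambda}) = u_{\bar\lambda}(Q)$ and hence $w_{\bar\lambda}(Q) = 0$ at an interior point of $A\setminus H_{\bar\lambda}$. The strong maximum principle for antisymmetric $s$-harmonic functions then forces $w_{\bar\lambda}\equiv 0$ in $\mathbb{R}^n$, so that $u$, and therefore both $\Omega$ and $D$, are symmetric with respect to $T_{\bar\lambda}$. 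The remaining alternatives are handled analogously: contact on $\partial D$ uses $\Gamma_R^D$ and the value $\alpha$, while the orthogonality case is treated via a fractional Hopf-type boundary lemma at $P\in\partial\Omega\cap T_{\bar\lambda}$ or $P\in\partial D\cap T_{\bar\lambda}$. Since $e\in\mathbb{S}^{n-1}$ was arbitrary, $u$, $\Omega$ and $D$ are symmetric about every hyperplane through a common point, so $D$ and $\Omega$ are concentric balls and $u$ is radially symmetric.

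The main difficulty is the identification of the correct stopping criterion in the annular setting: a priori one of the reflected caps of $\Omega$ or $D$ may touch its target earlier than the other, and one has to combine the antisymmetric weak maximum principle on the annular region $A\cap A_\lambda$ with the overdetermined conditions on \emph{both} parallel surfaces $\Gamma_R^D$ and $\Gamma_R^\Omega$ to produce the interior zero of $w_{\bar\lambda}$ regardless of which alternative triggers the stop.
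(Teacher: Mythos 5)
Your proof follows essentially the same route as the paper's: you run the method of moving planes simultaneously on $\Omega$ and $D$ up to the critical value $\overline\lambda$, form the antisymmetric difference $w_{\overline\lambda}=u-u_{\overline\lambda}$, check its nonnegativity on $H^-$ outside the domain of $s$-harmonicity, invoke the weak and strong maximum principles for antisymmetric $s$-harmonic functions, and extract a contradiction from the overdetermined conditions in each of the critical alternatives. If anything, your write-up is more explicit than the paper's in the final step: the paper dispatches the critical cases with the remark that they ``can be treated as in the proof of Theorem~\ref{a2s1theorem1},'' whereas you correctly spell out the key point that the tangency (resp.\ orthogonality) on $\partial\Omega$ or $\partial D$ must be transported along the normal to the corresponding point $Q$ on the parallel surface $\Gamma_R^\Omega$ or $\Gamma_R^D$, where the overdetermined condition forces $w_{\overline\lambda}(Q)=0$ at an interior point of the $s$-harmonicity region. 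Two small observations: writing $Q=P-R\nu_\Omega(P)$ implicitly uses the outward normal of $\partial\Omega$, which is more regularity than the theorem's nominal assumption that only $\Gamma_R^A$ is $C^1$ (the paper itself notes this and assumes $\partial A\in C^2$ in the proof); and the Case~2 alternative is handled in the paper via the boundary Harnack inequality of Lemma~\ref{a2s1lemma5} applied near the point of $\Gamma_R^\Omega\cap T_{\overline\lambda}$ (not at $P\in\partial\Omega$ itself), so ``Hopf-type boundary lemma at $P$'' is slightly imprecise terminology, although the underlying argument you intend is the right one.
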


Now we describe the quantitative stability result that we obtain for Theorem \ref{a2theorem3}. In this case, we replace the overdetermined condition \eqref{a2s1eq12} by assuming that the solution has small Lipschitz seminorm on each connected component of $\Gamma_R^A$. For this reason we define the following deficit 
\begin{equation} \label{deficit_A}
    \mathrm{def}_A(u):= \max \{ [u]_{\Gamma_R^D}, [u]_{\Gamma_R^\Omega} \},
\end{equation}
and we have the following result.

\begin{theorem}
\label{a2theorem4}
Let $A$ and $\Gamma^A_R$ be given by \eqref{A_def} and \eqref{Gamma_RA}, respectively, and assume that $\partial A$ and $\Gamma^A_R$ are of class $C^2$.

Let $u \in C^s(\mathbb{R}^n)$ be a solution of \eqref{a2s1eq12}. Then
\begin{equation}
\label{a2s1eq18}
\rho(D) + \rho(\Omega) \leq C_* \mathrm{def}_A(u)^{\frac{1}{s +2}} \,,
\end{equation}
with $\rho$ given by \eqref{def:outradius-inradius} and $C_*=C_*(n,s,R,\diam(\Omega), |\Omega|, |D|, \mathfrak{r}^e_{D}, \mathfrak{r}^i_{\Omega}) > 0$, where $\mathfrak{r}^e_{D}$ and $\mathfrak{r}^i_{\Omega}$ are the radius of the uniform exterior touching ball to $D$ and of the interior touching ball to $\Omega$, respectively. 
\end{theorem}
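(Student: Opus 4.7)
The plan is to adapt the quantitative moving planes procedure employed in the proof of Theorem \ref{a2theorem2} to the annular configuration, now handling the two overdetermined loci $\Gamma_R^D$ and $\Gamma_R^\Omega$ simultaneously. For a fixed direction $e \in S^{n-1}$ we slide the hyperplane $T_\lambda = \{x \cdot e = \lambda\}$ from $+\infty$ towards the origin, let $x_\lambda$ be the reflection of $x$ across $T_\lambda$, and consider the antisymmetric function $v_\lambda(x) := u(x_\lambda) - u(x)$, which is $s$-harmonic in $A \cap \Sigma_\lambda$ as long as the reflected cap is contained in $A$. Exactly as in the rigidity Theorem \ref{a2theorem3}, we slide until the first critical value $\bar \lambda = \bar\lambda(e)$ at which one of the standard obstructions appears: a non-equatorial internal tangency of the reflected annulus to $\partial\Omega$ or to $\partial D$, or an orthogonal intersection of $T_{\bar\lambda}$ with one of these boundaries.

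Next, we use the deficit to make quantitative the vanishing of $v_{\bar\lambda}$ that one has in the symmetric case. Since $\mathrm{def}_A(u)$ bounds the Lipschitz seminorms of $u$ on both $\Gamma_R^D$ and $\Gamma_R^\Omega$, the oscillation of $u$ on each of these connected parallel surfaces is at most $\diam(\Omega)\cdot \mathrm{def}_A(u)$. Comparing $u$ and its reflection at pairs of points lying on $\Gamma_R^D \cup \Gamma_R^\Omega$ produces $|v_{\bar\lambda}| \le C\,\mathrm{def}_A(u)$ at designated test points inside the reflected cap. A fractional (boundary) Harnack-type inequality then propagates this smallness to a uniform control of $v_{\bar\lambda}$ in a neighbourhood of the tangency point, playing the role the strong maximum principle plays in the qualitative proof.

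To turn this into a geometric bound we run, at the critical tangency, a barrier argument of Hopf type tailored to the $s$-Laplacian: we place an annular barrier inside the touching ball (of radius $\mathfrak{r}_D^e$ if the obstruction comes from $\partial D$, or $\mathfrak{r}_\Omega^i$ if it comes from $\partial \Omega$) and compare it with $v_{\bar\lambda}$, exploiting the $C^s$ regularity of $u$ and the nondegeneracy of the capacitary potential. This comparison is exactly the step that produces the exponent $1/(s+2)$ in \eqref{a2s1eq8}, and repeating it for every $e \in S^{n-1}$, together with the characterization \eqref{def:outradius-inradius} of $\rho$ via inscribed and circumscribed balls, yields simultaneously quantitative bounds on $\rho(\Omega)$ and on $\rho(D)$, whence on their sum.

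The main obstacle is that the moving plane on the annulus may be stopped either by a tangency at the outer boundary $\partial\Omega$ or at the inner boundary $\partial D$, and the two mechanisms are a priori geometrically decoupled from the two distinct overdetermined surfaces $\Gamma_R^\Omega$ and $\Gamma_R^D$. The argument therefore splits into cases, and in each case the barrier must be constructed so that it actually ``sees'' the corresponding parallel surface without crossing the other boundary; this is ensured by the assumption $R < \overline d /2$ together with the uniform touching-ball radii $\mathfrak{r}_D^e$ and $\mathfrak{r}_\Omega^i$, which also prevent geometric degeneration during the sliding. Keeping track of how these geometric parameters enter all barrier constants, and then combining the one-sided estimates for $\rho(D)$ and $\rho(\Omega)$ into the single inequality \eqref{a2s1eq18} with a common exponent, is the delicate technical point.
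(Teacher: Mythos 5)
Your proposal captures the general moving-plane skeleton, but it skips over the two quantitative mechanisms that actually produce the estimate, and the description of the Hopf step does not match an argument that would close.

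First, the exponent $1/(s+2)$ does not come from ``placing an annular barrier inside the touching ball'' and comparing it with $v_{\bar\lambda}$ at the tangency. A single barrier comparison gives a pointwise lower bound near the touching point, but by itself this cannot produce a sub-linear power of the deficit. What actually generates the exponent in the paper's proof is an \emph{optimization over a free parameter $\delta$}. One applies the quantitative Hopf lemma (Lemma \ref{s3lemma2}) with a ball $B=B_{R/8}(P)$ at an interior point $P$ of the reflected cap and with a compact set $K_\delta$ obtained by carving out a $\delta$-neighbourhood of $\partial A$ and of the hyperplane $T$ from the union $\tilde K = (\Omega^-\setminus\mathcal Q(\widehat\Omega))\cup(D^-\setminus\mathcal Q(\widehat D))\cap\mathcal Q(\widehat\Omega)$. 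Lemma \ref{a2s4lemma4} then gives $\inf_{K_\delta} w \gtrsim \delta^s$ (this is the ``nondegeneracy of the potential'' you allude to, but it must be used in this quantitative form), while $\dist(K_\delta,H^+)\ge\delta$ and $|K_\delta|\ge|\tilde K|-c\delta$. Combining gives $|\tilde K| \le \tilde C\bigl(\delta^{-(1+s)} w(P)+\delta\bigr)$, and optimizing in $\delta$ is precisely what yields the $1/(s+2)$ power of $w(P)\lesssim\mathrm{def}_A(u)$. Your sketch never introduces the measure $|\tilde K|$, the set $K_\delta$, or the optimization, so the exponent appears unjustified.

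Second, you pass directly from the local estimate at the tangency to a bound on $\rho(D)+\rho(\Omega)$. That step is not immediate. The paper's route is: control the \emph{measure} of $\Omega^-\setminus\mathcal Q(\widehat\Omega)$ (and the analogue for $D$), then convert this measure bound into a bound on $|\overline\lambda_e|$ by a slicing argument (partitioning $\Omega$ into slabs of width $2|\lambda_e|$, reflecting, and using that each slab has measure dominated by the first one — cf.\ \eqref{a2s3eq50}--\eqref{a2s3eq52}), and finally convert the bound on $|\overline\lambda_e|$ into a bound on $\rho(\Omega)$ by the specific choice of direction $e=(y-x)/|y-x|$ where $x,y$ realize the minimal and maximal distances of $\partial\Omega$ from the common intersection of the coordinate critical hyperplanes (cf.\ \eqref{a2s3eq53}, adapted in \eqref{a2s4eq62} to account for $\overline\lambda_e=\max\{\overline\lambda_\Omega^e,\overline\lambda_D^e\}$). ``Repeating for every $e\in\mathbb S^{n-1}$'' is not what is done, and without the slicing step there is no mechanism turning the local control at the critical position into a statement about $\rho$.

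Finally, a smaller but nontrivial point you gloss over: the Harnack comparison that bounds $w(P)$ in the reflected cap by $\mathrm{def}_A(u)$ depends on $P$ lying a fixed positive distance $R/8$ from $\partial A$, which is why the parallel surfaces $\Gamma_R^D$ and $\Gamma_R^\Omega$ (rather than $\partial D,\partial\Omega$ themselves) must carry the overdetermined conditions and why the constant picks up factors of $R$ and $K(n,s)$. Be explicit about which Harnack-type inequality (Lemma \ref{a2s1lemma5}) is being invoked, on which ball, and why the constant is uniform across the six critical configurations.
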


Theorems \ref{a2theorem2} and \ref{a2theorem4} are the main results of this paper, and they are obtained by using a quantitative approach to the method of moving planes, which was originally developed in \cite{aftalion1999approximate} (see also \cite{CiraoloVezzoni}, \cite{ciraolo2016solutions}, \cite{CMS}, \cite{ciraolo2018rigidity}, \cite{ciraolo2021symmetry}). This approach presents many differences when applied in a classical local settings and in a fractional framework, and in this paper we prefer to tackle fractional problems. We mention that all the symmetry and quantitative symmetry results in this paper have their classical local counterpart, which can be still attacked by using the method of moving planes and it will be considered in a future work. 

We finally notice that, in Theorems \ref{a2s1theorem1}-\ref{a2theorem4}, we assumed that $\Omega$ and $D$ are domains and then they are connected. The connectedness assumption is not necessary and it can be easily removed, and hence our results can be extended in that setting. However, this has a cost in managing the notation and it would worsen the presentation and clarity of the paper. For this reason, we preferred to assume that $\Omega$ and $D$ are connected.

\medskip

The paper is organized as follows. In Section 2 we present some preliminary notions and results, including a weak maximum principle for $s$-harmonic functions in an unbounded domain. Section 3 is devoted to the results for exterior sets and includes the standard machinery for the method of moving planes. In Section 4 we consider the problems involving annular domains.

\subsection*{Acknowledgements}
The authors have been partially supported by the ``Gruppo Nazionale per l'Analisi Matematica, la Probabilit\`a e le loro Applicazioni'' (GNAMPA) of the ``Istituto Nazionale di Alta Matematica'' (INdAM, Italy). The authors thank the anonymous referee for the valuable comments and suggestions.

%
%
%
%
%
%
%
%
%

\section{Preliminaries and notation}

In this section we introduce some notation and recall some results which will be useful in the rest of the paper.

We recall that given two functions $u,v \in H^{s}(\mathbb{R}^n)$ the $\textit{Gagliardo seminorm}$ of $u$ is defined as
\begin{equation*}
    [u]_s^2 := \frac{c_{n,s}}{2} \int_{\mathbb{R}^n} \int_{\mathbb{R}^n} \frac{|u(x)-u(y)|^2}{|x-y|^{n+2s}} dx dy,
\end{equation*}
and the scalar product in $H^s(\mathbb{R}^n)$ between $u$ and $v$ is defined as
\begin{equation*}
    \mathcal{E}(u,v) = \frac{c_{n,s}}{2} \int_{\mathbb{R}^n}\int_{\mathbb{R}^n}\frac{(u(x)-u(y))(v(x)-v(y))}{|x-y|^{n+2s}} dx dy \,,
\end{equation*}
where $c_{n,s}$ is given by \eqref{cns}.


%

In order to write a Hopf's boundary point lemma in a quantitative form, it is useful to consider the solution $\psi_{B_r(x_0)}$ to the fractional torsion problem in a ball of radius $r>0$ centered $x_0$, i.e. $\psi_{B_r(x_0)}$ satisfies
\begin{equation} \label{torsion_pb} 
\begin{cases}
(-\Delta)^s \psi_{B_r(x_0)} = 1 & \text{ in } B_r(x_0) \\
\psi_{B_r(x_0)} = 0 & \text{ in } \mathbb{R}^n \setminus B_r(x_0)\,,
\end{cases}
\end{equation}
and it is given by 
\begin{equation} \label{psi_def}
\psi_{B_r(x_0)}(x):= \gamma_{n,s} (r^2 - |x-x_0|^2)^s_+
\end{equation}
for any $x \in \mathbb{R}^n$, where $\gamma_{n,s}$ is a constant depending only on $n$ and $s$.

\begin{lemma}
\label{a2s3theorem1}
Let $\Omega \subset \mathbb{R}^n$ be an open set and let $u \in H^s(\mathbb{R}^n)$ be a solution of
\begin{equation*}
    \begin{cases}
    (-\Delta)^s u \geq 0 \quad & \text{in} \ \Omega,\\
    u \geq 0 \quad &\text{in} \ \mathbb{R}^n \setminus \Omega \,.
    \end{cases}
\end{equation*}
Let $x_0 \in \Omega$ and $r>0$ be such that $B_r(x_0) \subseteq \Omega$. Let $K\subset \mathbb{R}^n$ be a compact set such that 
$$
|K| > 0 \,, \quad \dist(K,B_r(x_0)) > 0 \,,\quad  \rm{essinf}_K u > 0 \,.
$$ 
Then \begin{equation*}
    u \geq C_H \, \psi_{B_r(x_0)} \qquad \text{in} \  \, B_r(x_0)\,,
\end{equation*}
where 
\begin{equation} \label{CHopf}
   C_H := c_{n,s} \frac{|K| \, \rm{essinf}_K u}{(2r + \dist(K,B_r(x_0)) + \diam(K))^{n+2s}} \,,
\end{equation}
with $c_{n,s}$ and $\psi_{B_r(x_0)}$ given by \eqref{cns} and \eqref{psi_def}, respectively.
\end{lemma}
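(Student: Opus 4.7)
The plan is to dominate $u$ from below by the $s$-harmonic replacement of an explicit truncated datum supported on $K$, and then invoke the Poisson formula on the ball $B_r(x_0)$ to produce a pointwise quantitative lower bound proportional to $\psi_{B_r(x_0)}$. As a preparatory step I would upgrade the sign condition $u \geq 0$ on $\mathbb{R}^n \setminus \Omega$ to $u \geq 0$ on all of $\mathbb{R}^n$: this follows from the weak maximum principle for $s$-superharmonic functions in $H^s(\mathbb{R}^n)$, obtained by testing $\mathcal{E}(u, u_-) \geq 0$ against $u_-$, which is supported in $\Omega$.

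Next I would set $g := \mathrm{essinf}_K u \cdot \chi_K$ and introduce the auxiliary function $v \in H^s(\mathbb{R}^n)$ solving
\begin{equation*}
\begin{cases}
(-\Delta)^s v = 0 & \text{in } B_r(x_0), \\
v = g & \text{in } \mathbb{R}^n \setminus B_r(x_0),
\end{cases}
\end{equation*}
which is well posed since $K$ is compact and $\dist(K, B_r(x_0)) > 0$. Since $u \geq g$ in $\mathbb{R}^n \setminus B_r(x_0)$ by the previous step and the definition of $g$, while $(-\Delta)^s u \geq 0 = (-\Delta)^s v$ in $B_r(x_0)$, the weak comparison principle yields $u \geq v$ in $B_r(x_0)$. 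The function $v$ admits the explicit Poisson representation
\begin{equation*}
v(x) = c(n,s) \, \mathrm{essinf}_K u \int_K \left( \frac{r^2 - |x - x_0|^2}{|y - x_0|^2 - r^2} \right)^{s} \frac{dy}{|x - y|^n}, \qquad x \in B_r(x_0),
\end{equation*}
for an explicit constant $c(n,s)$ depending only on $n$ and $s$.

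To finish, it suffices to bound the kernel uniformly from below over $K$. Picking a point of $K$ realizing $\dist(K, B_r(x_0))$ and applying the triangle inequality shows that, for every $y \in K$ and $x \in B_r(x_0)$,
\begin{equation*}
|x - y| \leq 2r + \dist(K, B_r(x_0)) + \diam(K) =: M, \qquad |y - x_0|^2 - r^2 \leq M^2,
\end{equation*}
so inserting these upper bounds into the denominators gives
\begin{equation*}
v(x) \geq \frac{c(n,s) \, |K| \, \mathrm{essinf}_K u}{M^{n+2s}} \, (r^2 - |x - x_0|^2)_+^s.
\end{equation*}
Recalling that $\psi_{B_r(x_0)} = \gamma_{n,s} (r^2 - |\cdot - x_0|^2)_+^s$ by \eqref{psi_def}, this is exactly the claimed inequality. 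The only real subtlety is bookkeeping of the absolute constant in \eqref{CHopf}: the Poisson constant $c(n,s)$ and the torsion normalization $\gamma_{n,s}$ must combine to give precisely the $c_{n,s}$ of \eqref{cns}, which reflects the standard relation between the Green function of $(-\Delta)^s$ on a ball and the torsion solution.
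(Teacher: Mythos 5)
Your proof is correct, but it takes a genuinely different route from the paper's. The paper constructs an explicit weak $s$-supersolution barrier $w = \psi_{B} + \delta\,\chi_K$ (with $\delta$ chosen so that $\mathcal{E}(w,\varphi)\le 0$ for nonnegative test functions in $B$) and applies the weak maximum principle to $u - \tau w$; no representation formula enters. You instead compare $u$ with the exact $s$-harmonic extension $v$ of the truncated datum $g = (\mathrm{essinf}_K u)\chi_K$ and then bound the Poisson kernel pointwise. The two yield the identical constant $C_H$, which is no coincidence: the paper's barrier constant is $c_{n,s}\gamma_{n,s}|K|\,\mathrm{essinf}_K u\, M^{-(n+2s)}$ where $M = 2r+\dist(K,B)+\diam(K)$, and your Poisson computation produces $C(n,s)|K|\,\mathrm{essinf}_K u\, M^{-(n+2s)}$; the constants match via the classical identity $C(n,s) = \Gamma(n/2)\sin(\pi s)\pi^{-n/2-1} = c_{n,s}\gamma_{n,s}$, which you correctly flag as the one genuinely nontrivial bookkeeping step (it reduces, using $\Gamma(1+s)=s\Gamma(s)$ and $\Gamma(2-s)=(1-s)\Gamma(1-s)$, to the reflection formula $\Gamma(s)\Gamma(1-s)=\pi/\sin(\pi s)$). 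The paper's barrier argument is more robust, requiring only the weak formulation and hence being adaptable to variable-coefficient or non-explicit kernels; your Poisson-kernel argument is more computational and makes the structure of $C_H$ transparent. You also handle explicitly the preliminary upgrade of $u\ge 0$ from $\Omega^c$ to all of $\mathbb{R}^n$; the paper uses this implicitly when applying its final maximum principle, so making it explicit is a small but genuine improvement in clarity.
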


Lemma \ref{a2s3theorem1} was already proved in \cite{greco2016hopf} and \cite{ros2014dirichlet}. Here, inspired by  \cite{fall2015overdetermined} and \cite{ciraolo2021symmetry}, we give a proof which allows us to explicitly write the constant $C_H$ given by \eqref{CHopf}, and to show its dependency on the parameters which are relevant in our problem. This will be useful when we will prove the quantitative results.

\begin{proof}[Proof of Lemma \ref{a2s3theorem1}] 
We consider the barrier function 
$$
w(x) := \psi_B (x) + \delta \, \chi_K (x) \,,
$$ 
where $B=B_r(x_0)$, $\chi_K$ is the characteristic function of $K \subset \mathbb{R}^n$ and $\delta > 0$ is a constant that will be chosen later.

Let $\varphi \in H^s_0 (\Omega)$ be a nonnegative test function. We have
\begin{align*}
    \mathcal{E} (w,\varphi) &= \mathcal{E} (\psi_B, \varphi) + \delta \, \, \mathcal{E} (\chi_K, \varphi) = \int_B \varphi - \delta \, c_{n,s} \, \int_K \int_B \frac{\varphi(y)}{|x - y|^{n+2s}} \, dy \, dx \leq \\
    &\leq (1 -\delta \, C) \int_B \varphi,
\end{align*}
which is less or equal than zero if we choose $\delta \geq C^{-1}$ with
\begin{equation*}
    C  = c_{n,s} \, |K| \, \inf_{x \in K, y \in B} \frac{1}{|x - y|^{n+2s}}. 
\end{equation*}

By setting 
$$
\tau := \rm{essinf}_K u / \delta = C \, \rm{essinf}_K u
$$ 
and applying the weak maximum principle for $s$-harmonic functions to 
$$
v := u - \tau \, w \,,
$$ 
we get that
\begin{equation*}
    u \geq c_{n,s} \frac{|K| \, \rm{essinf}_K u}{(\diam(B) + \dist(K,B) + \diam(K))^{n+2s}} \, \psi_B \quad \text{in} \, B,
\end{equation*}
which is the desired result. 
\end{proof}

Since our approach is based on the method of moving planes, a particular attention must be given to antisymmetric $s$-harmonic functions. More precisely, we will have to consider functions which are antisymmetric with respect to a hyperplane which can be chosen to be $\{x_1 = 0\}$ (up to a translation and rotation). 

In order to list these results, we need to introduce some notation: we set $H^+ := \{x_1 > 0\}$, $H^- := \{ x_1 < 0 \}$ and $T\coloneqq \{ x_1 = 0 \}$. Let 
$$
\mathcal Q: \mathbb{R}^n \to \mathbb{R}^n\,, \ \  y \mapsto y' = (-y_1,y_2,\dots,y_n) \,,
$$ 
be the reflection with respect to $T$ and, for a given set $E$ we call $E^+:= E \cap H^+$ and $E^-:= E \cap H^-$.

The first result is a weak maximum principle for $s$-harmonic antisymmetric functions, which is stated in \cite[Proposition 3.1]{fall2015overdetermined} on domains that are bounded, although for homogeneous equations this condition is not needed. We report this proposition here and we sketch a proof.

\begin{lemma}[Weak maximum principle for antisymmetric functions]
\label{a2s1lemma1}
Let $\Omega \subset \mathbb{R}^n$ be a compact set and let $u \in H^s(\mathbb{R}^n)$ be an antisymmetric (w.r.t. $T=\{ x_1 = 0 \}$) solution of
\begin{equation*}
    \begin{cases}
    (-\Delta)^s u = 0 \qquad &\text{in} \ \Omega^c,\\
    u \geq 0 \qquad &\text{in} \ \Omega^+.
    \end{cases}
\end{equation*}
Then, $u \geq 0$ a.e. in $H^+$.
\end{lemma}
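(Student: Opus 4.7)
The plan is to prove the result by contradiction, testing the equation against an antisymmetric extension of the negative part of $u$ on $H^+$. First observe that because $u$ is antisymmetric, $(-\Delta)^s u(x') = -(-\Delta)^s u(x)$, so the equation $(-\Delta)^s u = 0$ also holds on $(\mathcal{Q}(\Omega))^c$; replacing $\Omega$ by $\Omega \cup \mathcal{Q}(\Omega)$ (still compact, and $u \geq 0$ on $\Omega^+$ is unchanged) we may assume without loss of generality that $\Omega$ is symmetric with respect to $T$.

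Define $\eta \in H^s(\mathbb{R}^n)$ by
\begin{equation*}
\eta(x) := \begin{cases} u_-(x) & \text{if } x \in H^+, \\ -u_+(x) & \text{if } x \in H^-, \end{cases}
\end{equation*}
where $u_\pm = \max(\pm u, 0)$. Using $u(x') = -u(x)$ one checks that $u_-(x') = u_+(x)$, so $\eta$ is antisymmetric. Moreover, since $u \geq 0$ on $\Omega^+$, $\eta$ vanishes on $\Omega^+$, and by antisymmetry on $\Omega^-$ as well, hence $\mathrm{supp}(\eta) \subset \Omega^c$. Thus $\eta$ is an admissible test function and the equation gives $\mathcal{E}(u,\eta) = 0$.

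The next step is to recast $\mathcal{E}(u,\eta)$ as an integral over $H^+ \times H^+$. Splitting the double integral into the four half-space pieces and using antisymmetry of $u$ and $\eta$ yields
\begin{equation*}
\mathcal{E}(u,\eta) = c_{n,s} \int_{H^+}\!\!\int_{H^+} \left[ \frac{(u(x)-u(y))(\eta(x)-\eta(y))}{|x-y|^{n+2s}} + \frac{(u(x)+u(y))(\eta(x)+\eta(y))}{|x-y'|^{n+2s}} \right] dx\, dy.
\end{equation*}
On $H^+$ we have $u = u_+ - u_-$ and $\eta = u_-$, with $u_+ u_- = 0$. A direct algebraic expansion gives, on $H^+ \times H^+$,
\begin{align*}
(u(x)-u(y))(\eta(x)-\eta(y)) &= -(u_-(x)-u_-(y))^2 - u_+(x) u_-(y) - u_+(y) u_-(x), \\
(u(x)+u(y))(\eta(x)+\eta(y)) &= -(u_-(x)+u_-(y))^2 + u_+(x) u_-(y) + u_+(y) u_-(x).
\end{align*}

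The main point is now the elementary geometric inequality $|x-y| \leq |x-y'|$ for $x,y \in H^+$, which yields $|x-y|^{-n-2s} \geq |x-y'|^{-n-2s}$. Combining this with the two identities above, the cross terms $u_+(x)u_-(y)+u_+(y)u_-(x) \geq 0$ contribute nonpositively since they are weighted by $|x-y'|^{-n-2s} - |x-y|^{-n-2s} \leq 0$, while the quadratic terms $-(u_-(x)\mp u_-(y))^2$ contribute nonpositively with positive weights. Therefore the integrand is pointwise $\leq 0$, and in fact dominated by $-(u_-(x)+u_-(y))^2 |x-y'|^{-n-2s} \leq 0$. Since $\mathcal{E}(u,\eta) = 0$, this forces $u_-(x) + u_-(y) = 0$ for a.e.\ $(x,y) \in H^+ \times H^+$, hence $u_- \equiv 0$ on $H^+$, i.e.\ $u \geq 0$ a.e.\ in $H^+$.

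The main obstacle I anticipate is the algebraic bookkeeping in the splitting step: one must justify carefully the reduction to the $H^+ \times H^+$ integral (changes of variable $x \mapsto x'$, $y \mapsto y'$ and the symmetries of the kernel), and verify that $\eta \in H^s(\mathbb{R}^n)$ and is admissible as a test function in the weak formulation (the latter uses the symmetrization of $\Omega$ mentioned above and the standard fact that the positive/negative parts of $H^s$ functions lie in $H^s$).
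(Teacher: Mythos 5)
Your proof follows essentially the same route as the paper's (which simply cites Fall--Jarohs, Prop.\ 3.1, for the computation): test the equation against a variant of $u_-\chi_{H^+}$, decompose the bilinear form over $H^+\times H^+$ via the reflection substitution, and exploit the pointwise inequality $|x-y|\leq |x-y'|$ for $x,y\in H^+$. The quadrant reduction and the two algebraic identities you write out are correct, and the conclusion that the integrand is dominated by $-(u_-(x)+u_-(y))^2|x-y'|^{-(n+2s)}\leq 0$, forcing $u_-\equiv 0$ a.e.\ on $H^+$, is sound.

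There is, however, a genuine gap in your opening reduction. You replace $\Omega$ by the symmetric set $\Omega\cup\mathcal{Q}(\Omega)$, asserting that this is harmless since ``$u\geq 0$ on $\Omega^+$ is unchanged.'' But the hypothesis the lemma requires for the enlarged set is $u\geq 0$ on $(\Omega\cup\mathcal{Q}(\Omega))^+ = \Omega^+\cup\mathcal{Q}(\Omega^-)$, and nothing in the hypotheses gives $u\geq 0$ on $\mathcal{Q}(\Omega^-)$: that would amount to $u\leq 0$ on $\Omega^-$, which is not assumed (it follows from the conclusion, not from the hypotheses). Without the symmetrization, $\eta=-u_+$ on $\Omega^-$ and you cannot conclude it vanishes there, so $\eta$ is not known to be admissible. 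This is precisely why the paper (and Fall--Jarohs) test with $\varphi:=u_-\chi_{H^+}$ directly: $\varphi$ vanishes on $\Omega^+$ because $u\geq 0$ there, and vanishes on $\Omega^-\subset H^-$ trivially, so $\varphi\in H^s_0(\Omega^c)$ and $\mathcal{E}(u,\varphi)=0$ with no symmetrization needed.

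The fix is minor and lets you keep your cleaner antisymmetric bookkeeping: observe that $\eta=\varphi-\varphi\circ\mathcal{Q}$, and that for antisymmetric $u$ the change of variables $x\mapsto x'$, $y\mapsto y'$ gives $\mathcal{E}(u,\varphi\circ\mathcal{Q})=-\mathcal{E}(u,\varphi)$, whence $\mathcal{E}(u,\eta)=2\,\mathcal{E}(u,\varphi)=0$. You never need $\eta$ itself to be a test function, only $\eta\in H^s(\mathbb{R}^n)$, and the rest of your computation goes through unchanged for a general compact $\Omega$.
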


\begin{proof}
Since $u$ is $s$-harmonic in $\Omega^c$ then for every $ \varphi \in H^s_0(\Omega^c)$ we have
\begin{equation}
    \mathcal{E}(u,\varphi) = 0.
\end{equation}

Let $\varphi = u_- \chi_{H^+} \in H^s_0(\Omega^c)$, where $\chi_{H^+}$ is the characteristic function of $H^+$. Following the same computations as in \cite[Proposition 3.1]{fall2015overdetermined} we get 
\begin{equation}
    0 \leq \mathcal{E}(u,\varphi) \leq - \mathcal{E}(\varphi,\varphi) = - [\varphi]_s^2 \,,
\end{equation}
which immediately implies that $\varphi=0$ a.e. and hence $u_-=0$ a.e. in $H^+$.
\end{proof}

An analogous weak maximum principle holds for nonnegative functions in $H^s(\mathbb{R}^n)$. More precisely we have

\begin{lemma}[Weak maximum principle]
\label{a2s1lemma1bis}
Let $\Omega \subset \mathbb{R}^n$ be a compact set and let $u \in H^s(\mathbb{R}^n)$ be a solution of
\begin{equation*}
    \begin{cases}
    (-\Delta)^s u = 0 \qquad &\text{in} \ \Omega^c,\\
    u \geq 0 \qquad &\text{in} \ \Omega.
    \end{cases}
\end{equation*}
Then, $u \geq 0$ a.e. in $\mathbb{R}^n$.
\end{lemma}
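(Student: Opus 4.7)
The plan is to mirror the proof of Lemma \ref{a2s1lemma1}, but with a simpler test function because antisymmetry is no longer involved. The natural candidate is $\varphi := u_-$, the negative part of $u$. First I would check that $\varphi$ is an admissible test function: since $u \in H^s(\mathbb{R}^n)$ and $t \mapsto t_-$ is $1$-Lipschitz, we have $u_- \in H^s(\mathbb{R}^n)$; moreover, the hypothesis $u \geq 0$ in $\Omega$ forces $u_- \equiv 0$ on $\Omega$, so $\varphi \in H^s_0(\Omega^c)$. Hence the $s$-harmonicity of $u$ in $\Omega^c$ gives
\begin{equation*}
    0 = \mathcal{E}(u,u_-) = \mathcal{E}(u_+,u_-) - \mathcal{E}(u_-,u_-) = \mathcal{E}(u_+,u_-) - [u_-]_s^2.
\end{equation*}

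The key step is to show $\mathcal{E}(u_+,u_-) \leq 0$. This is the pointwise observation that for every $x,y \in \mathbb{R}^n$ one has $u_+(x)\,u_-(x) = 0$ and $u_+(y)\,u_-(y) = 0$, because at each point at most one of $u_+$, $u_-$ is nonzero. Therefore
\begin{equation*}
    (u_+(x)-u_+(y))(u_-(x)-u_-(y)) = -u_+(x)u_-(y) - u_+(y)u_-(x) \leq 0,
\end{equation*}
and integrating against the positive kernel $|x-y|^{-(n+2s)}$ yields $\mathcal{E}(u_+,u_-) \leq 0$. Combined with the displayed identity, this forces $[u_-]_s^2 \leq 0$, hence $u_- = 0$ almost everywhere in $\mathbb{R}^n$; equivalently, $u \geq 0$ a.e.

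There is essentially no serious obstacle: the whole argument is a one-line pointwise sign inspection on $(u_+(x)-u_+(y))(u_-(x)-u_-(y))$, followed by testing the weak formulation against $u_-$. The only technical point worth double-checking is the admissibility of $u_-$ as an element of $H^s_0(\Omega^c)$ when $\Omega^c$ is unbounded; this is standard because $H^s_0(\Omega^c)$ here is naturally understood as the subspace of $H^s(\mathbb{R}^n)$ consisting of functions vanishing a.e.\ on $\Omega$, and $u_-$ belongs to it by construction. Once this is in place the computation is a direct, and slightly simpler, analogue of the one already carried out for Lemma \ref{a2s1lemma1}.
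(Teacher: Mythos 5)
Your proof is correct and takes exactly the approach the paper takes: the paper's proof consists of a single line saying that the argument of Lemma \ref{a2s1lemma1} applies with the simpler test function $\varphi = u_-$, and you have filled in precisely that computation. The pointwise sign inspection giving $\mathcal{E}(u_+,u_-)\leq 0$, the admissibility of $u_-$ as a test function, and the conclusion $[u_-]_s=0\Rightarrow u_-\equiv 0$ (which uses $u_-\in L^2(\mathbb{R}^n)$ to rule out nonzero constants) are all sound and match the intended reasoning.
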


\begin{proof}
The proof is analogous to the one of Lemma \ref{a2s1lemma1}, since it is enough to consider $ \varphi = u_-$.
\end{proof}
 
As an immediate consequence we have the following comparison principle for $s$-capacitary functions.  

\begin{corollary} \label{corol_monot}
Let $E\subset F \subset \mathbb{R}^n$ be open bounded domains, and let $u_E$ and $u_F$ be the corresponding capacitary functions, i.e. the solutions to \eqref{a2s1eq10} for $\Omega=E$ and $\Omega=F$, respectively. Then we have 
\begin{equation} \label{monotonicity}
u_E \leq u_F
\end{equation}
in $\mathbb R^n$.
\end{corollary}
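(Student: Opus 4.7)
The plan is to set $v := u_F - u_E$ and apply the weak maximum principle of Lemma \ref{a2s1lemma1bis} with $\Omega = \overline E$. Note that $v \in H^s(\mathbb{R}^n)$ since both $u_E, u_F \in H^s(\mathbb{R}^n)$ by \eqref{a2s1eq10}.

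The crucial observation is that $v$ is $s$-harmonic on the complement of $\overline E$, not merely on the complement of $\overline F$. Indeed, on $\mathbb{R}^n \setminus \overline F$ both $u_E$ and $u_F$ are $s$-harmonic (since $E \subset F$ implies $\mathbb{R}^n \setminus \overline F \subset \mathbb{R}^n \setminus \overline E$, so $(-\Delta)^s u_E = 0$ there as well). On the set $\overline F \setminus \overline E$ the function $u_F$ is identically $1$ and therefore $(-\Delta)^s u_F = 0$, while $u_E$ is $s$-harmonic because $\overline F \setminus \overline E \subset \mathbb{R}^n \setminus \overline E$. Combining these two regions gives $(-\Delta)^s v = 0$ in $\mathbb{R}^n \setminus \overline E$.

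On $\overline E$ one has $u_F \equiv 1$ (because $\overline E \subset \overline F$) and $u_E \equiv 1$, so $v \equiv 0$ on $\overline E$; in particular $v \geq 0$ there. Since $\overline E$ is compact, all the hypotheses of Lemma \ref{a2s1lemma1bis} are satisfied with the choice $\Omega = \overline E$, and we conclude $v \geq 0$ a.e.\ in $\mathbb{R}^n$. Using the continuity of $u_E$ and $u_F$ (which follows from standard interior regularity for $s$-harmonic functions, together with the continuous boundary data), the inequality \eqref{monotonicity} holds pointwise in $\mathbb{R}^n$.

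The proof is essentially immediate from Lemma \ref{a2s1lemma1bis}; the only subtle point is recognizing that the difference is $s$-harmonic on the larger set $\mathbb{R}^n \setminus \overline E$ rather than just $\mathbb{R}^n \setminus \overline F$, which is what allows the weak maximum principle to be applied with the (small) set $\overline E$ as the obstacle.
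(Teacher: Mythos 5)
Your proof contains a genuine error concerning the nonlocal nature of the fractional Laplacian. You claim that ``on the set $\overline F \setminus \overline E$ the function $u_F$ is identically $1$ and therefore $(-\Delta)^s u_F = 0$.'' This would be true for the classical (local) Laplacian, but it is false for $(-\Delta)^s$: for $x \in \overline F$ one has
\begin{equation*}
(-\Delta)^s u_F(x) = c_{n,s}\, \text{P.V.}\!\int_{\mathbb{R}^n} \frac{u_F(x) - u_F(z)}{|x-z|^{n+2s}}\,dz = c_{n,s} \int_{\mathbb{R}^n} \frac{1 - u_F(z)}{|x-z|^{n+2s}}\,dz > 0,
\end{equation*}
since $u_F < 1$ on a set of positive measure. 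In weak terms, the identity $\mathcal{E}(u_F,\varphi) = 0$ holds only for $\varphi \in H^s_0(\mathbb{R}^n\setminus\overline F)$, not for all $\varphi \in H^s_0(\mathbb{R}^n\setminus\overline E)$, so $v = u_F - u_E$ is \emph{not} $s$-harmonic on $\mathbb{R}^n \setminus \overline E$; it is only $s$-superharmonic there. Hence Lemma \ref{a2s1lemma1bis} as stated cannot be applied with $\Omega = \overline E$.

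The paper instead applies Lemma \ref{a2s1lemma1bis} with $\Omega = \overline F$, where $v$ really is $s$-harmonic on the complement. The price to pay is that one must check $v \geq 0$ on all of $\overline F$, not merely on $\overline E$: on $\overline F \setminus \overline E$ one has $u_F \equiv 1$, but the bound $u_E \leq 1$ there is a nontrivial fact for the nonlocal capacitary potential (in the local case it is the trivial maximum principle, but here it requires an argument and is quoted from the literature). Your route could be repaired by first proving a weak maximum principle for $s$-\emph{superharmonic} functions (the proof of Lemma \ref{a2s1lemma1bis} does extend, since $\varphi = u_-\geq 0$ yields $\mathcal{E}(u,\varphi)\geq 0$ for a supersolution), but as written the crucial harmonicity claim is incorrect.
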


\begin{proof}
Since $u_E$ is a $s$-capacitary function, from \cite[Lemmas 2.6 and 2.7]{warma2015fractional} we have that $0 \leq u_E \leq 1$ in $\mathbb{R}^n \setminus E$. Then, by applying Lemma \ref{a2s1lemma1bis} to $v:= u_F - u_E$ we obtain the result.
\end{proof}

From Lemma \ref{a2s1lemma1}, we can also recover a quantitative version of the Hopf lemma for antisymmetric functions as proved in \cite{ciraolo2021symmetry}, that we recall below.

\begin{lemma} [Lemma 4.1 in \cite{ciraolo2021symmetry}]
\label{s3lemma2}
Let $\Omega$ be an open set in $H^-$ and $B \subset \Omega$ a ball of radius $R > 0$ such that $\mathrm{dist}(B,H^+) > 0$. Let $v \in H^s(\mathbb{R}^n)$ be antisymmetric and a solution of
\begin{equation*}
    \begin{cases}
    (-\Delta)^s v \ge 0 \quad & \text{in} \ \Omega,\\
    v \geq 0 \quad & \text{in} \ H^-.
    \end{cases}
\end{equation*}
Let $K \subset H^-$ be a set of positive measure such that $\overline{K} \subset (H^- \setminus \overline{B})$ and $\rm{essinf}_K v > 0$. Then we have that
\begin{equation}
\label{s3eq19}
v \geq C \big[  \mathrm{dist} (K,H^+) \, |K| \, \rm{essinf}_K v \big] \psi_B \quad \text{in} \ B,
\end{equation}
with
\begin{equation*}
    C:=  \frac{ 2 (n+2s) \, C(n,s) \, \mathrm{dist} (B,H^+)^{n+2s+1}}{(\mathrm{dist}(B,H^+)^{n+2s}+\gamma_{n,s} \, C(n,s) \, |B| \, R^{2s}) \, ( \mathrm{diam} (B) + \mathrm{diam}(K) + \mathrm{dist}(\mathcal{Q}(K),B))^{n+2s+2}}.
\end{equation*}
\end{lemma}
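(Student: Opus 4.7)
I would mimic the proof of Lemma~\ref{a2s3theorem1}, but with an antisymmetric barrier that exploits the reflection across $T$ to produce the extra factor $\dist(K,H^+)$ appearing in \eqref{s3eq19}. Specifically, set
\[
w(x) := \psi_B(x) - \psi_{\mathcal Q(B)}(x) + \delta\bigl[\chi_K(x) - \chi_{\mathcal Q(K)}(x)\bigr],
\]
with $\delta>0$ to be chosen, and aim to make $(-\Delta)^s w \leq 0$ in $B$. Then compare $v$ with $\tau w$ for a suitable $\tau>0$ via an antisymmetric weak maximum principle.

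\textbf{Quantitative subharmonicity in $B$.} For $x \in B \subset H^-$ I would split $(-\Delta)^s w(x)$ into its four natural contributions. The term $(-\Delta)^s \psi_B(x) = 1$ is immediate from \eqref{torsion_pb}. The contribution $-(-\Delta)^s \psi_{\mathcal Q(B)}(x) = c_{n,s}\int_{\mathcal Q(B)}\psi_{\mathcal Q(B)}(y)|x-y|^{-n-2s}\,dy$ is positive but bounded above by $c_{n,s}\gamma_{n,s}|B|R^{2s}/(2\dist(B,H^+))^{n+2s}$, using $\psi_{\mathcal Q(B)}\le \gamma_{n,s}R^{2s}$ and $|x-y|\ge 2\dist(B,H^+)$ for $y\in \mathcal Q(B)$. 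The remaining two contributions combine into
\[
-\delta c_{n,s} \int_K \bigl[|x-y|^{-n-2s} - |x-\mathcal Q(y)|^{-n-2s}\bigr]\, dy,
\]
and this is where the antisymmetry pays off: the identity $|x-\mathcal Q(y)|^2 - |x-y|^2 = 4x_1 y_1 \ge 4\dist(B,H^+)\dist(K,H^+)$, together with the convexity of $r\mapsto r^{-n-2s}$ and the bound $|x-\mathcal Q(y)|\le \diam(B)+\diam(K)+\dist(\mathcal Q(K),B)$, gives the lower estimate $(n+2s)\dist(B,H^+)\dist(K,H^+)|K|/(\diam(B)+\diam(K)+\dist(\mathcal Q(K),B))^{n+2s+2}$ on the integral. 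Summing, $(-\Delta)^s w \le 0$ in $B$ provided $\delta \ge \delta_0 := \bigl(C\,\dist(K,H^+)\,|K|\bigr)^{-1}$, with $C$ precisely the constant in \eqref{s3eq19}.

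\textbf{Comparison step.} Fix $\delta = \delta_0$, set $\tau := \rm{essinf}_K v/\delta_0$ and $V := v - \tau w$. Then $V$ is antisymmetric, $s$-superharmonic in $B$, satisfies $V = v - \tau\delta_0 = v - \rm{essinf}_K v \ge 0$ on $K$, and $V = v \ge 0$ on $H^-\setminus(B\cup K)$; hence $V\ge 0$ on all of $H^- \setminus B$. By antisymmetry $V\le 0$ on $H^+\setminus \mathcal Q(B)$ and $V$ is $s$-subharmonic in $\mathcal Q(B)$. Testing against $\varphi := V_+\chi_{H^+}$ (which is supported in $\mathcal Q(B)$ by the sign information just stated) and reusing the exact calculation of Lemma~\ref{a2s1lemma1} with $u = -V$ yields the chain $0 \le \mathcal E(-V,\varphi) \le -[\varphi]_s^2$, forcing $\varphi\equiv 0$; i.e.\ $V\le 0$ in $H^+$, equivalently $V\ge 0$ in $H^-$. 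Restricting to $B$ and unfolding $\tau$ gives $v \ge C\,\dist(K,H^+)\,|K|\,\rm{essinf}_K v\cdot \psi_B$ in $B$, which is \eqref{s3eq19}.

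\textbf{Main obstacle.} The delicate point is the quantitative subharmonicity estimate in $B$; producing the exact power $\dist(B,H^+)^{n+2s+1}$ and the factor $\dist(K,H^+)$ in the denominator of $\delta_0$ requires simultaneously exploiting the convexity of $r\mapsto r^{-n-2s}$ and the sharp lower bound on $|x-\mathcal Q(y)|^2-|x-y|^2$ in terms of the products of distances of $B$ and $K$ to the reflection plane, while carefully tracking the dependence on every geometric parameter. Once this bound is in place, the choice of $\delta_0$ and the antisymmetric maximum principle application are routine adaptations of the proofs of Lemma~\ref{a2s3theorem1} and Lemma~\ref{a2s1lemma1}.
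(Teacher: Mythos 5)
Your proof is correct and, as far as I can tell, is exactly the argument that the cited reference (Lemma 4.1 of \cite{ciraolo2021symmetry}) uses and that this paper omits: an antisymmetric barrier $w=\psi_B-\psi_{\mathcal Q(B)}+\delta(\chi_K-\chi_{\mathcal Q(K)})$, a pointwise bound on $(-\Delta)^s w$ in $B$ in which the reflection identity $|x-\mathcal Q(y)|^2-|x-y|^2=4x_1y_1$ together with the mean value/convexity estimate on $r\mapsto r^{-(n+2s)}$ produces the factor $\dist(K,H^+)\,\dist(B,H^+)$ and the exponent $n+2s+2$, and a comparison with $v$ via the antisymmetric weak maximum principle of Lemma~\ref{a2s1lemma1} applied to $V=v-\tau w$ (tested against $V_+\chi_{H^+}$, which is supported in $\overline{\mathcal Q(B)}$). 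The only cosmetic discrepancy is in the intermediate bound on $-(-\Delta)^s\psi_{\mathcal Q(B)}$, where you use $|x-y|\ge 2\,\dist(B,H^+)$ while the stated constant corresponds to the weaker $|x-y|\ge \dist(B,H^+)$; this only improves the constant and does not affect the argument.
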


\begin{remark}
    Lemma 4.1 in \cite{ciraolo2021symmetry} actually requires that $v \in C^s(\Omega)$, but it is straightforward to verify that the proof is still valid if one assumes $v \in H^s(\mathbb{R}^n)$. For this reason we omit the proof of Lemma \ref{s3lemma2}.
\end{remark}

It is clear that Lemma \ref{s3lemma2} provides a quantitative version of the strong maximum principle for antisymmetric $s$-harmonic functions, which still holds when $\Omega$ is not bounded, as already noted in \cite[Proposition 2.1]{soave2019overdetermined}. 

\begin{lemma}[Strong maximum principle for antisymmetric functions] \label{lemma_SMP}
Let $\Omega$ be an open set with $\Omega \subset H^-$ and let $v \in C(\Omega)$ be antisymmetric and a solution of
\begin{equation*}
    \begin{cases}
    (-\Delta)^s v \ge 0 \quad & \text{in} \ \Omega,\\
    v \geq 0 \quad & \text{in} \ H^-.
    \end{cases}
\end{equation*}
Then, either $v > 0$ in $\Omega$ or $v \equiv 0$ in $\mathbb{R}^n$.
\end{lemma}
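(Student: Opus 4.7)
The plan is to deduce the strong maximum principle directly from the quantitative Hopf-type bound of Lemma \ref{s3lemma2}. Suppose by contradiction that $v$ is not strictly positive in $\Omega$, so there exists $x_0 \in \Omega$ with $v(x_0)=0$, yet $v\not\equiv 0$ in $\mathbb R^n$. By antisymmetry it is enough to derive a contradiction from the assumption that $v$ is not identically zero in $H^-$, and (since $v\ge 0$ in $H^-$) this is the same as saying the set $\{v>0\}\cap H^-$ has positive Lebesgue measure.

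The first step is geometric: since $\Omega$ is open and $\Omega\subset H^-$, we can pick $r>0$ small enough so that the ball $B:=B_r(x_0)$ satisfies $\overline{B}\subset\Omega\cap H^-$, and in particular $\mathrm{dist}(B,H^+)>0$. Shrinking $r$ further if needed, since $\{x_0\}$ has zero measure we can also ensure
\[
\bigl|\{v>0\}\cap H^-\setminus\overline{B}\bigr|>0.
\]
The next step is to upgrade this positive-measure set to an admissible set $K$ for Lemma \ref{s3lemma2}. Writing $\{v>0\}\cap H^-\setminus\overline{B}$ as the countable union of the level sets $\{v\ge 1/m\}\cap H^-\setminus\overline{B}$, at least one of them has positive measure; by inner regularity of Lebesgue measure we extract a compact subset $K$ of positive measure, still contained in $H^-\setminus\overline{B}$, on which $\operatorname*{essinf}_K v\ge 1/m>0$.

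With $K$ and $B$ so chosen, the hypotheses of Lemma \ref{s3lemma2} are met, and we obtain
\[
v\ge C\bigl[\mathrm{dist}(K,H^+)\,|K|\,\operatorname*{essinf}_K v\bigr]\,\psi_B\qquad\text{in }B,
\]
with the bracketed quantity strictly positive. Since $\psi_B(x_0)>0$ by \eqref{psi_def}, this forces $v(x_0)>0$, contradicting $v(x_0)=0$. Hence $v=0$ a.e.\ in $H^-$, and by antisymmetry $v\equiv 0$ in $\mathbb R^n$.

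The main (mild) obstacle is the measure-theoretic step of producing a compact $K$ with a strictly positive essential lower bound on $v$: outside $\Omega$ we only have $v\in H^s(\mathbb R^n)$ rather than continuity, so one cannot simply pick a tiny ball in which $v$ is bounded below, but the layer-cake/inner-regularity argument sketched above remedies this. Everything else is a direct invocation of Lemma \ref{s3lemma2}, together with the antisymmetry of $v$ to transfer the conclusion from $H^-$ to all of $\mathbb R^n$.
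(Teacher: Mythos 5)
Your argument is correct and uses the same key tool as the paper, namely the quantitative antisymmetric Hopf estimate of Lemma~\ref{s3lemma2} applied to a ball centred at the vanishing point $x_0$. The one genuine difference is in how the comparison set $K$ is produced. The paper picks a compact $K\subset\Omega$ with $\inf_K v>0$ (possible, via continuity of $v$ on $\Omega$, whenever $v$ is strictly positive somewhere in $\Omega$), and this is essentially asserted rather than constructed; in particular, the scenario where $v\equiv 0$ throughout $\Omega$ while $v>0$ on a positive-measure subset of $H^-\setminus\Omega$ is not explicitly addressed, even though the conclusion $v\equiv 0$ in all of $\mathbb R^n$ requires ruling it out. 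You avoid that issue by allowing $K$ to live anywhere in $H^-\setminus\overline B$ (which Lemma~\ref{s3lemma2} permits) and producing it from the positive-measure set $\{v>0\}\cap H^-$ via the layer-cake decomposition $\{v>0\}=\bigcup_m\{v\ge 1/m\}$ and inner regularity of Lebesgue measure. This is more robust and makes the step from ``$v\not\equiv 0$'' to the existence of an admissible $K$ fully explicit, at the cost of slightly more measure theory. One small presentational point: your closing sentence (``Hence $v=0$ a.e.\ in $H^-$, and by antisymmetry $v\equiv 0$'') sits a bit awkwardly after you have already framed the argument as a global contradiction; it is really the alternative, non-contradiction phrasing of the same reasoning (fix $x_0$ with $v(x_0)=0$ and deduce $|\{v>0\}\cap H^-|=0$), and it would read more cleanly if you committed to one of the two framings. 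Finally, as you yourself flag, applying Lemma~\ref{s3lemma2} requires $v\in H^s(\mathbb R^n)$ in addition to the stated $v\in C(\Omega)$; that assumption is implicitly present in the paper as well, so this is not a defect of your argument.
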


\begin{proof}
From the weak maximum principle in Lemma \ref{a2s1lemma1} we have that $v \geq 0$ in $\Omega$. Now assume there exists $x_0 \in \Omega$ such that $v(x_0) = 0$ and choose a ball $B$ centered in $x_0$ and such that $\overline{B} \subset \Omega$. Let $K \subset \Omega$ be a compact set  such that $\dist(B,K) > 0$ and $|K| > 0$. If we furthermore choose $B$ and $K$ such that $\inf_K v > 0$, by applying Lemma \ref{s3lemma2} we have
\begin{equation*}
v \geq C \big[  \dist (K,H^+) \, |K| \, \inf_K v \big] \psi_B \quad in \ B,
\end{equation*}
and in particular $v(x_0) > 0$, which is a contradiction.
\end{proof}

Another tool from \cite{ciraolo2021symmetry} that we will need in our proof is the boundary Harnack inequality for $s$-harmonic antisymmetric functions that we report here for clarity. 

\begin{lemma}[Lemma 2.1 in \cite{ciraolo2021symmetry}]
\label{a2s1lemma5}
Let $u\in C^2(B_R) \cap C(\mathbb{R}^n)$ be a solution of
\begin{equation*}
    \begin{cases}
    (-\Delta)^s u = 0 \quad & \text{in }  B_R,\\
    u(x') = - u(x) \quad &\text{for  every }  x \in \mathbb{R}^n,\\
    u \geq 0 \quad &  \text{in }  H^+.
    \end{cases}
\end{equation*}
There exists a constant $K > 1$ only depending on $n$ and $s$ such that, for every $z \in B_{R/2}^+$ and for every $x \in B_{R/4}(z) \cap B_R^+$ we have
\begin{equation}
\label{s3eq2}
    \frac{1}{K} \frac{u(z)}{z_1} \leq \frac{u(x)}{x_1} \leq K \frac{u(z)}{z_1}.
\end{equation}
\end{lemma}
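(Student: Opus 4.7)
The plan is to prove the two-sided inequality \eqref{s3eq2} by exploiting the antisymmetry of $u$ and the regularity $u\in C^2(B_R)$, together with the tools already recorded earlier in this section. After the rescaling $u(\cdot)\mapsto u(R\cdot)$ we may assume $R=1$, so that the constant $K$ will depend only on $n$ and $s$. The structural fact is that, by antisymmetry and $u\ge 0$ in $H^+$, the function $u$ vanishes linearly on $T=\{x_1=0\}$: writing $u(x)=x_1\,\partial_1 u(0,x')+O(x_1^2)$ near $T$, the natural quantity to control is the ratio $u(x)/x_1$.

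For the upper bound I would use the Poisson representation of $s$-harmonic functions in $B_1$,
\[
u(x)=c_{n,s}\int_{B_1^c}\left(\frac{1-|x|^2}{|y|^2-1}\right)^{s}\frac{u(y)}{|x-y|^n}\,dy =: \int_{B_1^c} P(x,y)\,u(y)\,dy,
\]
and fold the integral using $u(y')=-u(y)$ to rewrite it as $\int_{H^+\setminus B_1}[P(x,y)-P(x,y')]\,u(y)\,dy$. The difference kernel $P(x,y)-P(x,y')$ is antisymmetric in $x_1$ and hence vanishes linearly as $x_1\to 0^+$; by a direct estimate it is comparable at any $x\in B_{1/4}(z)\cap B_1^+$ and at $z\in B_{1/2}^+$ up to a factor $C\,x_1/z_1$. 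Integrating against $u(y)\ge 0$ then yields $u(x)/x_1\le K\,u(z)/z_1$ with $K=K(n,s)$.

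For the lower bound I would combine Lemma \ref{s3lemma2} with the interior Harnack inequality for nonnegative $s$-harmonic functions. I would select a compact set $K_0\subset H^+\cap B_1$ of definite measure, well separated from $T$, on which a Harnack chain joining $z$ to $K_0$ gives $\mathrm{essinf}_{K_0} u\gtrsim u(z)$. Applying Lemma \ref{s3lemma2} on a ball $B\subset H^+$ of radius and distance from $T$ both comparable to $x_1$ yields $u\ge c\,\psi_B$ with constants controlled by $u(z)/z_1$.

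The main obstacle is precisely the final step: converting the $s$-power decay inherent in $\psi_B$ into the sharp linear rate in $x_1$ required by \eqref{s3eq2}. This would be done by iterating the Hopf estimate on a dyadic family of balls approaching $T$ and checking that the constants at each stage only compose to a dimensional factor, exploiting the fact that each use of Lemma \ref{s3lemma2} gains a factor $\mathrm{dist}(\cdot,H^+)$ from the coefficient in the definition of $C$ there. This upgrade from $x_1^s$ to $x_1$ is what is special to the antisymmetric setting and is what distinguishes this result from an ordinary Dirichlet boundary Harnack principle; once it is in place the lower bound $u(x)/x_1\ge K^{-1}u(z)/z_1$ combines with the upper bound to complete \eqref{s3eq2}.
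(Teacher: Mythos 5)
This lemma is cited verbatim from \cite{ciraolo2021symmetry} and the present paper does not reprove it, so the comparison is with the known argument there, which is a Poisson-kernel computation.

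Your upper bound is on the right track and is essentially the standard proof: writing
\[
u(x)=\int_{H^{+}\setminus B_{1}}\bigl[P(x,y)-P(x,y')\bigr]\,u(y)\,dy,\qquad
P(x,y)=c_{n,s}\Bigl(\tfrac{1-|x|^{2}}{|y|^{2}-1}\Bigr)^{s}|x-y|^{-n},
\]
and observing that for $x\in B_{3/4}^{+}$ and $y\in H^{+}\setminus B_{1}$ one has the \emph{two-sided} bound
\[
P(x,y)-P(x,y')\asymp\ x_{1}\,y_{1}\,\Bigl(\tfrac{1-|x|^{2}}{|y|^{2}-1}\Bigr)^{s}\,|x-y|^{-(n+2)}
\]
with constants depending only on $n$. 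Since $1-|x|^{2}$, $1-|z|^{2}$ and $|x-y|$, $|z-y|$ are all pairwise comparable for $z\in B_{1/2}^{+}$ and $x\in B_{1/4}(z)\cap B_{1}^{+}$, the kernels $\bigl[P(x,y)-P(x,y')\bigr]/x_{1}$ and $\bigl[P(z,y)-P(z,y')\bigr]/z_{1}$ are comparable uniformly in $y$. But note that this comparability is \emph{two-sided}: integrating against $u(y)\ge 0$ gives both $u(x)/x_{1}\le K\,u(z)/z_{1}$ \emph{and} $u(x)/x_{1}\ge K^{-1}u(z)/z_{1}$ simultaneously. You claim this very comparability in your own description and then use only half of it; there is no need for a separate argument for the lower bound.

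The separate argument you propose for the lower bound has a genuine gap. Iterating Lemma~\ref{s3lemma2} on a dyadic family of balls $B_{k}$ of radius and distance from $T$ both of order $2^{-k}$, with $K_{k}$ the previous ball, produces at each step a fixed multiplicative constant $c_{0}=c_{0}(n,s)<1$: the factors $\mathrm{dist}(K_{k},H^{+})$, $|K_{k}|$, $\psi_{B_{k}}\sim 2^{-2sk}$ and the explicit constant $C\sim 2^{k(n+2s+1)}$ in Lemma~\ref{s3lemma2} all rescale homogeneously and cancel, leaving only a scale-independent loss. Thus after $k$ steps the lower bound is $\gtrsim c_{0}^{k}$, which dominates $2^{-k}=x_{1}$ only if $c_{0}\ge 1/2$ — and there is no reason for the non-sharp Hopf constant of Lemma~\ref{s3lemma2} to satisfy this. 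Iterating a Hopf estimate with a generic constant cannot by itself upgrade the $x_{1}^{s}$ decay of $\psi_{B}$ to the sharp linear rate. The upgrade is precisely what the kernel cancellation $P(x,y)-P(x,y')=O(x_{1})$ encodes, and that is where it must come from.
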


\section{Exterior sets}
In this section we consider the exterior overdetermined problem and prove Theorems \ref{a2s1theorem1} and \ref{a2theorem2}.

\subsection{The method of moving planes: notation}  \label{subsect_MMP}
We introduce some notation in order to exploit the moving planes method. Given $e \in \mathbb{S}^{n-1}$, a set $E \subset \mathbb{R}^n$ and $\lambda \in \mathbb{R}$, we define
\begin{align*}
    &T_\lambda = T_\lambda^e = \{ x \in \mathbb{R}^n \, | \, x \cdot e = \lambda \} & &\textrm{a hyperplane orthogonal to } e,\\
    &H_\lambda = H_\lambda^e = \{ x \in \mathbb{R}^n \, | \, x \cdot e > \lambda \} & &\textrm{the ``positive'' half space with respect to } T_\lambda \\
    &E_\lambda = E \cap H_\lambda & &\textrm{the ``positive'' cap of } E,\\
    &x_\lambda' = x -2(x\cdot e - \lambda) \, e & &\textrm{the reflection of } x \textrm{ with respect to } T_\lambda,\\
    &\mathcal{Q}= \mathcal{Q}_\lambda^e : \mathbb{R}^n \to \mathbb{R}^n, x \mapsto x_\lambda' & &\textrm{the reflection with respect to } T_\lambda.
\end{align*}

If $E \subset \mathbb{R}^n$ is an open bounded set with boundary of class $C^1$ then we define 
$$
\Lambda_e := \sup \{ x \cdot e \, | \, x \in E \}
$$ 
and
\begin{equation*}
    \lambda_e = \inf \{ \lambda \in \mathbb{R} \, | \, \mathcal{Q}(E_{\Tilde{\lambda}}) \subset E, \textrm{for all} \, \Tilde{\lambda} \in (\lambda, \Lambda_e) \}.
\end{equation*}

{F}rom this point on, given a direction $e \in \mathbb{S}^{n-1}$, we will refer to $T_{\lambda_e} = T^e$ and $E_{\lambda_e} = \widehat{E}$ as the \textit{critical hyperplane} and the \textit{critical cap} with respect to $e$, respectively, and will call $\lambda_e$ the \textit{critical value} in the direction $e$. We now recall from \cite{serrin1971symmetry} that, for any given direction $e$, at least one of the following two conditions holds:\\

\textbf{Case 1} - The boundary of the reflected cap $\mathcal{Q}^e(\widehat{E})$ becomes internally tangent to the boundary of $E$ at some point $P \not \in T^e$;\\

\textbf{Case 2} - the critical hyperplane $T^e$ becomes orthogonal to the boundary of $E$ at some point $Q \in T^e$.

\subsection{The symmetry result}

We start with the symmetry result given in Theorem \ref{a2s1theorem1}. 

\begin{proof}[Proof of Theorem \ref{a2s1theorem1}]
Let $e \in \mathbb{S}^{n-1}$ be a fixed direction. Withouth loss of generality we assume $e=e_1$. We recall that we are considering a solution $u \in C^s(\mathbb{R}^n)$ of \eqref{a2s1eq10} satisfying \eqref{a2s1eq16} and that $G=\Omega + B_R$, with $\partial G$ of class $C^1$. 

We apply the method of moving planes described in Subsection \ref{subsect_MMP} by letting $E=G$. Without loss of generality, we can assume that $\lambda_e = 0$ (that is, the critical hyperplane $T$ goes through the origin), and we simplify the notation by setting $H^- := \{ x_1 < 0 \}$, $\Omega^- := H^- \cap \Omega$ and considering
\begin{equation*}
    v(x):= u(x) - u(\mathcal Q(x)) \quad \text{for } \ x \in \mathbb{R}^n,
\end{equation*}
where $\mathcal Q: \mathbb{R}^n \to \mathbb{R}^n, x \mapsto x'$ is the reflection with respect to $T$. We have
\begin{equation*}
    \begin{cases}
    (-\Delta)^s v = 0 \quad &\text{in} \ H^- \setminus \Omega^- \\
    v \geq 0 \quad &\text{in} \ \Omega^-\\
    v(\mathcal Q(x)) = - v (x) \quad &\text{for every } x \in \mathbb{R}^n.
\end{cases}
\end{equation*}

By using Lemma \ref{a2s1lemma1} we know that $v \geq 0$ in $H^-$ and then Lemma \ref{lemma_SMP} tells us that either $v > 0$ in $H^- \setminus \Omega^-$ or $v \equiv 0$ in $\mathbb{R}^n$. Now we show that if we assume that $v > 0$ in $H^-\setminus \Omega^-$ then we obtain a contradiction.

\textbf{Case 1} - Let $P$ be a critical point on $\partial G$. Since both $P$ and its reflection $P'$ belong to $\partial G$ and \eqref{a2s1eq16} holds, we immediately get
\begin{equation*}
    v(P) = u(P) - u(P') = 0,
\end{equation*}
which is a contradiction.\\

\textbf{Case 2} - In this case $e_1$ is tangent to $\partial G$ at a point $Q \in \partial G$, and therefore we have that $\partial_1 v (Q) = 0$. On the other hand, since $Q$ is far away from the boundary $\partial \Omega$, we can use Lemma \ref{a2s1lemma5} to show that $\partial_1 v (Q) < 0$, which is a contradiction.

Indeed, setting $z = (-R/4, Q_2, \dots, Q_n)$ and $x=x_t = (-t, Q_2, \dots, Q_n) \in B_{R/4}(z)$, with $0<t<R/8$, we have that
\begin{equation}
\label{a2s2eq43}
    \frac{v(x_t)}{- t} \ge - \frac{ 4}{R K} v(z),
\end{equation}
where $K>1$ is a constant only depending on $n$ and $s$.
Being $z \in H^- \setminus \Omega^-$, we have that $v(z)>0$, and the claim follows from \eqref{a2s2eq43} by letting $t\to 0^+$.
\end{proof}

\subsection{Almost symmetry in one direction}
Now we consider the quantitative stability result and prove Theorem \ref{a2theorem2}. This will be done in two subsequent steps: we first prove the quantitative stability estimate in one direction and then, in the proof of Theorem \ref{a2theorem2} we will sketch a general idea of how to use the result in one direction to obtain the final quantitative estimate; the proof can be found in details in Section 6 of \cite{ciraolo2021symmetry}. 

We start by proving a preliminary result which gives the behaviour of the solution to \eqref{a2s1eq10} close to the boundary. 

\begin{lemma} \label{lemma_conticino}
Under the assumptions of Theorem \ref{a2theorem2}, let $u$ be a solution of \eqref{a2s1eq10} and let $v:= 1- u$. For any $r \leq \mathfrak{r}_{\Omega}^e$ we have
\begin{equation}
\label{a2s3eq35}
     v(x) \geq  C_{cap} \, (\dist (x,\partial \Omega))^s \quad \text{in} \ (\Omega + B_r)\setminus \Omega,
\end{equation}
where
\begin{equation*}
    C_{cap} := \frac{c_{n,s} \, \gamma_{n,s} \, \omega_{n}}{4} \, \frac{r^{n+s}}{(2r + r_0 \, \diam (\Omega))^{n+2s}}
\end{equation*}
and $r_0 > 0$ is a constant depending on $n$ and $s$.
\end{lemma}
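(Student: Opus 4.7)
The plan is to apply the quantitative Hopf boundary point lemma (Lemma \ref{a2s3theorem1}) to $v := 1 - u$, using an exterior touching ball at $\partial \Omega$. Since $(-\Delta)^s$ annihilates constants, $v$ satisfies $(-\Delta)^s v = 0$ in $\mathbb{R}^n \setminus \overline{\Omega}$, vanishes on $\overline{\Omega}$, and tends to $1$ at infinity; the weak maximum principle (Lemma \ref{a2s1lemma1bis}) then gives $v \geq 0$ in $\mathbb{R}^n$, so the hypotheses of Lemma \ref{a2s3theorem1} hold with the ``open set'' there taken to be $\mathbb{R}^n \setminus \overline{\Omega}$.

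Fix $x \in (\Omega + B_r)\setminus \Omega$, put $d := \dist(x,\partial\Omega) < r$, and let $x_0 \in \partial \Omega$ realize this distance. The condition $r \leq \mathfrak{r}_\Omega^e$ provides an exterior tangent ball $B_r(y_0) \subset \mathbb{R}^n \setminus \overline \Omega$ with $y_0 = x_0 + r \nu(x_0)$, where $\nu$ is the outer unit normal. Since $x$ lies on the segment from $x_0$ to $y_0$, we have $x \in B_r(y_0)$ with $|x - y_0| = r-d$, and consequently
\begin{equation*}
    \psi_{B_r(y_0)}(x) = \gamma_{n,s}\bigl(d(2r-d)\bigr)^s \geq \gamma_{n,s} \, r^s d^s.
\end{equation*}

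The remaining input is a compact set $K$, disjoint from $B_r(y_0)$, on which $v$ is quantitatively bounded away from zero; this is where $\diam(\Omega)$ enters. Picking any $x_* \in \Omega$, one has $\Omega \subset B_{\diam(\Omega)}(x_*)$, so by Corollary \ref{corol_monot} one obtains $u \leq u_{B_{\diam(\Omega)}(x_*)}$. From the explicit $s$-capacitary potential of the unit ball and its scaling, there exists a universal $r_0 = r_0(n,s) > 1$ with $u_{B_1(0)}(\xi) \leq 1/2$ for $|\xi| \geq r_0$, whence $v(y) \geq 1/2$ for every $y$ with $|y - x_*| \geq r_0 \diam(\Omega)$. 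Taking $K = B_r(z)$ with $z$ placed on the ray from $x_*$ through $y_0$ inside this far-field region, one obtains $|K| = \omega_n r^n$, $\rm{essinf}_K v \geq 1/2$, and $\dist(K, B_r(y_0)) + \diam(K) \leq r_0 \diam(\Omega)$, up to harmlessly enlarging the universal constant $r_0$ to absorb additive contributions of order $r$. Plugging these into the explicit form of $C_H$ given by \eqref{CHopf} and combining with the barrier estimate above yields
\begin{equation*}
    v(x) \geq C_H \, \psi_{B_r(y_0)}(x) \geq \frac{c_{n,s} \, \gamma_{n,s} \, \omega_n}{2} \, \frac{r^{n+s}\, d^s}{(2r + r_0 \diam(\Omega))^{n+2s}},
\end{equation*}
which is stronger than \eqref{a2s3eq35}.

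The main obstacle is this far-field lower bound on $v$ with constants depending only on $n$ and $s$: one needs the explicit $s$-capacitary function of a ball together with its scaling to ensure that the threshold distance beyond which $v \geq 1/2$ is a purely dimensional multiple of $\diam(\Omega)$, so that every geometric quantity entering the placement of $K$ is ultimately absorbed into the single universal constant $r_0$.
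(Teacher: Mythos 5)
Your proposal is correct and follows essentially the same route as the paper: reduce to $v=1-u$ (which is $s$-harmonic off $\overline\Omega$ and nonnegative by Lemma \ref{a2s1lemma1bis}), compare $u$ with the $s$-capacitary potential of a ball of radius $\diam(\Omega)$ via Corollary \ref{corol_monot} to locate a far-field region where $v\ge 1/2$, and then invoke the quantitative Hopf barrier of Lemma \ref{a2s3theorem1} with $B$ an exterior touching ball and $K$ a ball of radius $r$ placed in that far-field region. In fact your write-up is slightly cleaner than the paper's in two respects: you name the exterior ball correctly as $B_r(y_0)$ with $y_0=x_0+r\nu(x_0)$ (the paper's $B_{\mathfrak r^e_\Omega}(x_0)$ with $x_0\in\partial\Omega$ is a notational slip -- such a ball would not lie in $\mathbb R^n\setminus\overline\Omega$), and you explicitly evaluate $\psi_{B_r(y_0)}(x)=\gamma_{n,s}\bigl(d(2r-d)\bigr)^s\ge\gamma_{n,s}r^sd^s$, which the paper leaves implicit. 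The one place where you (and the paper) gloss over a detail is the verification that $\dist(K,B_r(y_0))>0$ and that the quantity $2r+\dist(K,B)+\diam(K)$ is genuinely controlled by $2r+r_0\diam(\Omega)$: this requires either that $r\lesssim\diam(\Omega)$ or a slightly more careful placement of $z$, since $\mathfrak r^e_\Omega$ (hence the admissible $r$) can exceed $\diam(\Omega)$. This is a cosmetic point that both treatments absorb into ``enlarging $r_0$'' and does not affect the substance of the argument.
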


\begin{proof}
Without loss of generality, we can assume that the origin $O$ is contained in $\Omega$ and consider the $s$-capacitary solution $\tilde{u}$ of the ball $B_{\diam(\Omega)}$ centered at the origin and of radius $\diam(\Omega)$:  
\begin{equation*}
    \begin{cases}
    (-\Delta)^s \tilde{u} = 0 \quad &\text{in} \ \mathbb{R}^n \setminus B_{\diam(\Omega)},\\
    \tilde{u} = 1 \quad &\text{in} \ B_{\diam(\Omega)},\\
    \tilde{u}(x) \to 0 \quad &\text{as} \ |x| \to + \infty.
    \end{cases}
\end{equation*}
Since $0 \leq \tilde{u} \leq 1$ and $\tilde{u}$ is radial, non-increasing and continuous (see for instance \cite[Theorem 1.10]{soave2019overdetermined}, there exists a radius $\tilde{R} = \tilde{R}(\diam (\Omega)) > 0$ such that
\begin{equation*}
    \tilde{u} < 1/2 \quad \text{in} \quad \mathbb{R}^n \setminus B_{\tilde{R}}.
\end{equation*}
Moreover, from Corollary \ref{corol_monot} we have that $\tilde{u} \geq u$ in the whole space. From this we get that
\begin{equation}
    v = 1 - u \geq 1 - \tilde{u} \geq 1/2 \quad \text{in} \ \mathbb{R}^n \setminus B_{\tilde{R}}.
\end{equation}

We now choose $K=\overline{B_{\mathfrak{r}^e_\Omega}((\tilde{R}+\mathfrak{r}^e_\Omega) \,e_1)}$. For $x_0 \in \partial \Omega$, we now apply Lemma \ref{a2s3theorem1} to $v$ with $B = B_{\mathfrak{r}^e_\Omega}(x_0)$ and $K$ and get
\begin{equation*}
    v(x) \geq \frac{c_{n,s} \, \gamma_{n,s} \, \omega_n}{2} \, \frac{R^{n+s}}{(4R + \dist(K,B))^{n+2s}} \, (R - |x|)^s \quad \text{in} \ B.
\end{equation*}

We now repeat the same argument on the whole boundary $\partial \Omega$ by keeping each time the same fixed set $K$. We notice that in every case we have $\dist(K,B) \leq 2\tilde{R}$, and by using the previous inequality we obtain \eqref{a2s3eq35}, where the constant $C_{cap} > 0$ can be written as 
\begin{equation*}
   C_{cap} := \frac{c_{n,s} \, \gamma_{n,s} \, \omega_n}{4} \, \frac{R^{n+s}}{(2R + \tilde{R})^{n+2s}}.
\end{equation*}
In order to complete the proof, we show how $\tilde R$ depends on $\diam(\Omega)$. We consider the solution $u_{B_1}$ to the capacitary problem \eqref{a2s1eq10} with $\Omega = B_1$, and we set
$$
r_0=\inf\{ \, |x| \ | \ u_{B_1}(x)< 1/2 \} \,. 
$$ 
By scaling properties, it is clear that $\tilde R = r_0 \, \diam (\Omega)$. This completes the proof.
\end{proof}

With this result at hand, we can prove a quantitative estimate which involves the measure of $\Omega^- \setminus \mathcal Q(\widehat{\Omega})$.

\medskip

We fix a direction $e \in \mathbb{S}^{n-1}$. Without loss of generality, we can assume that $e=e_1$ and that the associated critical hyperplane is $T = \{x_1 = 0\}$, with $\mathcal Q: \mathbb{R}^n \to \mathbb{R}^n, x \mapsto x'$ the reflection with respect to $T$. For the proof of the next lemma we will use the following notation: we set for $t \geq 0$
\begin{equation*}
    \Omega_t \coloneqq \Omega + B_t(0), \quad \widehat{\Omega_t} \coloneqq \Omega_t \cap H^+, \quad 
    \Omega_t^- \coloneqq \Omega_t\cap H^- \quad U_t \coloneqq \mathcal Q(\widehat{\Omega_t}).
\end{equation*}
Note that $G = \Omega_R$.

\begin{lemma}
\label{a2s3lemma1}
Given $P \in \overline{U_R}$ with $B=B_{R/8} (P)$ such that $\mathrm{dist}(B,\partial U_0) \ge R/8$, for $\delta > 0$, we have that
\begin{equation}
\label{a2s3eq39}
|\Omega^- \setminus \mathcal Q(\widehat{\Omega}) \ | \ \leq \Tilde{C} \, (\delta^{-(1+s)} v(P) + \delta),
\end{equation}
where $\Tilde{C} > 0$ is a constant depending only on $n$, $s$, $R$, $\mathfrak{r}^e_\Omega$ and $\mathrm{diam}(\Omega)$.
\end{lemma}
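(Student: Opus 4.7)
The plan is to decompose the asymmetry set $W:=\Omega^-\setminus U_0$ into a thin boundary shell and an interior core, to bound the measure of the shell by $C\delta$ via the $C^2$ regularity of $\partial \Omega$, and to bound the core by $C\,v(P)\,\delta^{-(1+s)}$ by transferring the pointwise value of $v$ at $P$ through the antisymmetric Hopf lemma. Here $v(x):=u(x)-u(\mathcal{Q}(x))$ is the antisymmetric defect function used in the proof of Theorem \ref{a2s1theorem1}; by the moving planes setup it is $s$-harmonic in $H^-\setminus(\Omega^-\cup U_0)$ and nonnegative in $H^-$. On $W$ one has $u(x)=1$ and $\mathcal{Q}(x)\in H^+\setminus\widehat{\Omega}$, so that $v(x)=1-u(\mathcal{Q}(x))>0$, and this positivity is precisely what we want to integrate.

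For $\delta>0$ set
\[
W_\delta:=\{x\in W\,:\,\dist(x,\partial \Omega)\ge\delta\ \text{and}\ \dist(x,T)\ge\delta\}.
\]
The complement $W\setminus W_\delta$ lies in a $\delta$-tubular neighborhood of $\partial \Omega\cup T$, whose Lebesgue measure is $\le C\delta$ thanks to the uniform exterior touching ball $\mathfrak r^e_\Omega$ (which bounds the perimeter of $\partial\Omega$ in terms of $\diam(\Omega)$) together with the trivial $\diam(\Omega)^{n-1}$ bound on the flat cross section $\Omega\cap T$. On the core, the interior condition $\dist(x,\partial \Omega)\ge\delta$ combined with the uniform exterior ball at $\partial \Omega$ forces the reflected point $\mathcal{Q}(x)\in\mathbb R^n\setminus\Omega$ to satisfy $\dist(\mathcal{Q}(x),\partial \Omega)\ge c\,\delta$ for some $c=c(\mathfrak r^e_\Omega)>0$. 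Lemma \ref{lemma_conticino} applied at $\mathcal{Q}(x)$ therefore yields $1-u(\mathcal{Q}(x))\ge C_{\mathrm{cap}}(c\delta)^s$, so that
\[
\operatorname{essinf}_{W_\delta}v\,\ge\, C_1\,\delta^s
\]
with $C_1=C_1(n,s,R,\diam(\Omega),\mathfrak r^e_\Omega)>0$.

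We now feed $K=W_\delta$ and $B=B_{R/8}(P)$ into the quantitative antisymmetric Hopf lemma (Lemma \ref{s3lemma2}): the placement hypothesis $\dist(B,\partial U_0)\ge R/8$ together with $P\in\overline{U_R}$ puts $B$ inside an open subset of $H^-$ where $v$ is $s$-superharmonic, while $\overline{W_\delta}\subset H^-\setminus\overline B$ by construction. The lemma then gives
\[
v\,\ge\, C\,\bigl(\dist(W_\delta,H^+)\,|W_\delta|\,\operatorname{essinf}_{W_\delta}v\bigr)\,\psi_B\qquad\text{in }B.
\]
Evaluating at $P$, using $\dist(W_\delta,H^+)\ge\delta$, the lower bound on the essential infimum from the previous step, and $\psi_B(P)=\gamma_{n,s}(R/8)^{2s}$, we obtain $v(P)\ge C_2\,\delta^{1+s}|W_\delta|$, hence $|W_\delta|\le C_2^{-1}\,\delta^{-(1+s)}v(P)$. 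Adding the shell estimate $|W\setminus W_\delta|\le C\delta$ proves \eqref{a2s3eq39}.

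The main obstacle is the geometric claim that $\dist(\mathcal{Q}(x),\partial \Omega)\ge c\,\delta$ on $W_\delta$: without the uniform exterior ball condition, a thin spike of $\partial\Omega$ sitting near the symmetry plane $T$ could come arbitrarily close to the reflection of a deeply interior point of $\Omega^-$, collapsing the $\delta^s$ factor in the lower bound of $v$ on the core and destroying the final exponent. The role of $\mathfrak r^e_\Omega$ in the constant $\widetilde C$ is precisely to forbid such pathologies. Once this geometric bookkeeping is in place, the rest of the argument is a clean combination of Lemmas \ref{lemma_conticino} and \ref{s3lemma2}, in the spirit of the strategy developed in \cite{ciraolo2021symmetry}.
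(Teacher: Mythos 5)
Your overall architecture matches the paper's: excise a thin bad set from the asymmetry region $W:=\Omega^-\setminus\mathcal Q(\widehat\Omega)$, bound it by $O(\delta)$, and feed the remaining compact core $K$ into the quantitative antisymmetric Hopf lemma (Lemma~\ref{s3lemma2}) together with the boundary behaviour estimate (Lemma~\ref{lemma_conticino}). Where you diverge is in \emph{which} thin set you excise, and that is where there is a genuine gap.

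You remove the points of $W$ with $\dist(x,\partial\Omega)<\delta$, and then claim that on the remaining core $\dist(x,\partial\Omega)\geq\delta$ together with the uniform exterior touching ball at $\partial\Omega$ forces $\dist(\mathcal Q(x),\partial\Omega)\geq c\,\delta$. That implication does not hold, and the exterior ball condition does not rescue it. The quantities $\dist(x,\partial\Omega)$ and $\dist(\mathcal Q(x),\partial\Omega)$ are governed by parts of $\partial\Omega$ on opposite sides of the critical plane $T$, and nothing couples them: if a flat (hence perfectly exterior-ball-regular) portion of $\partial\Omega$ lies near $\mathcal Q(x)$ in $H^+$, then $\mathcal Q(x)$ can be within distance $\varepsilon\ll\delta$ of $\partial\Omega$ while $x$ itself sits at distance $\geq\delta$ from $\partial\Omega$, since the nearest boundary to $x$ in $H^-$ is determined by a completely different piece of $\partial\Omega$. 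The critical-plane property at hand is a constraint on $G=\Omega+B_R$, not on $\Omega$, and gives no control on how close the reflection of the $\Omega^-$-interior can come to $\partial\Omega$. So the lower bound $\operatorname{essinf}_{W_\delta} v\gtrsim\delta^s$ is unjustified on your core set.

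The fix is precisely what the paper does: define the excised set \emph{directly} in terms of the reflected distance. Set $A_\delta:=\{x\in\Omega^c:\dist(x,\partial\Omega)<\delta\}$ and remove $E_\delta:=\mathcal Q(A_\delta)\cap W$ together with the slab $F_\delta:=\{x\in W:\dist(x,T)<\delta\}$. On $K_\delta:=W\setminus(E_\delta\cup F_\delta)$ one has, by definition, $\mathcal Q(x)\in\Omega^c$ with $\dist(\mathcal Q(x),\partial\Omega)\geq\delta$, and Lemma~\ref{lemma_conticino} then gives $v(x)=1-u(\mathcal Q(x))\geq C_{cap}\,\delta^s$ with no further geometric input. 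The measure of the excised set is still $O(\delta)$: $|E_\delta|\leq|A_\delta|$ because $\mathcal Q$ preserves Lebesgue measure, and $|A_\delta|\lesssim\delta$ by the shell estimate (Lemma~5.2 in~\cite{ciraolo2021symmetry}), while $|F_\delta|\leq\diam(\Omega)^{n-1}\delta$. So the estimate
\[
v(P)\geq \overset{\star}{C}\,C_{cap}\,(R/8)^{2s}\gamma_{n,s}\,\delta^{1+s}\big(|W|-\tilde c\,\delta\big)
\]
follows as in the paper. Note also that $\mathfrak r^e_\Omega$ enters the constant $\widetilde C$ only through $C_{cap}$ in Lemma~\ref{lemma_conticino}, not through any reflection-distance transfer; attributing it to the latter is a misreading of the mechanism.
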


\begin{proof}
We set $K_\delta := (\Omega^- \setminus \mathcal Q(\widehat{\Omega})) \setminus (E_\delta \cup F_\delta)$,
where
$$
E_\delta := \mathcal Q (A_\delta) \cap (\Omega^- \setminus \mathcal Q(\widehat{\Omega})) \quad \text{ with } A_\delta := \{ x \in \Omega^c \ | \ \dist (x, \partial \Omega) < \delta  \},  
$$
$$
F_\delta:= \{ x \in \Omega^- \setminus \mathcal{Q}(\widehat{\Omega}) \, | \, \mathrm{dist}(x, T) < \delta  \}.
$$

Using Lemma \ref{s3lemma2} with $B := B_{R/8} (P)$ and $K := K_\delta$ we obtain
\begin{equation*}
	v \geq \overset{\star}{C} \,  \big[  \mathrm{dist} (K_\delta ,H^+) \, | K_\delta | \, \inf_{K_\delta} v \big] \psi_B \quad \text{in} \ B,
\end{equation*}
where $\overset{\star}{C} > 0$ is an explicit constant depending on $n$, $s$, $R$ and $\diam(\Omega)$. Here we used that, in the present situation, we have $K \subset \Omega$ and that $\dist (B, U_0) \le R$.

Since $K_\delta \subseteq (\Omega^- \setminus \mathcal  Q(\widehat{\Omega})) \setminus F_\delta$, then 
\begin{equation*}
\mathrm{dist} (K_\delta,H^+) \geq \delta .
\end{equation*}

We now point out that in $K_\delta$ we have $v = u - u' = 1 - u'$; we can therefore apply Lemma \ref{lemma_conticino} and get
\begin{equation*}
    v(x) \geq C_{cap} (n,s,\mathfrak{r}^e_{\Omega}) \, \delta^s =  C_{cap} \, \delta^s \quad \text{for every } \ x\in K_\delta.
\end{equation*}

Moreover, we have
\begin{equation}
\label{a2s3eq43}
|K_\delta| = |\Omega^- \setminus U_R| - |E_\delta \cup F_\delta| \ge |\Omega^- \setminus U_R| - (|E_\delta| + | F_\delta |) .
\end{equation}
By definition of $F_\delta$, we have that
\begin{equation}
\label{a2s3eq44}
	|F_\delta| \le  \mathrm{diam}( \Omega )^{n-1} \delta.
\end{equation}
By using Lemma 5.2 in \cite{ciraolo2021symmetry}, since $E_\delta \subseteq A_\delta$, we have
\begin{equation}
\label{a2s3eq45}
    |E_\delta| \leq \left[\frac{2 n |\Omega|}{R} \right]  \delta.
\end{equation}
Putting together \eqref{a2s3eq43}, \eqref{a2s3eq44} and \eqref{a2s3eq45} we get
\begin{equation*}
	|K_\delta| \ge |\Omega^- \setminus U_R| - \tilde{c} \,  \delta,
\end{equation*}
where $\tilde{c}$ is a positive constant depending on $n$, $\mathrm{diam}(\Omega)$ and $\mathfrak{r}^e_{\Omega}$.

\medskip

Hence we have proved that
\begin{equation*}
    v(P) \geq \overset{\star}{C} \, C_{cap} \, (R/8)^{2s} \, \gamma_{n,s} \delta^{1+s} \big( |\Omega^- \setminus U_R| - \tilde{c} \, \delta \big) \,,
\end{equation*}
and, by choosing
\begin{equation*}
    \Tilde{C} := \max \left\lbrace \frac{8^{2s}}{ C_{cap} \, R^s \, \overset{\star}{C}} \, , \, \, \tilde{c} \right\rbrace ,
\end{equation*}
we get the desired inequality \eqref{a2s3eq39}.
\end{proof}

Once Lemma \ref{a2s3lemma1} is proved we can follow \cite{ciraolo2021symmetry} to get the almost symmetry in one direction and, with the same reasoning as in \cite[Section 6]{ciraolo2021symmetry}, obtain the same quantitative stability estimate required for the proof of Theorem \ref{a2theorem2}. For this reason we only sketch the main ideas in the following proof.

\begin{proof}[Proof of Theorem \ref{a2theorem2}]
Once we have inequality \eqref{a2s3eq39}, we can argue as in Lemma 5.6 of \cite{ciraolo2021symmetry} to obtain the estimate for the almost symmetry in one direction, namely
\begin{equation}
\label{a2s3eq49}
    |\Omega^- \setminus \mathcal Q(\widehat{\Omega}) | \leq \overline{C} [u]_{\partial G}^{\frac{1}{s+2}},
\end{equation}
where $\overline{C}:= \max\{ 1, \diam(\Omega), K \, (R/2) \} \, \tilde{C}$, with $\tilde{C}$ as in \eqref{a2s3eq39} and $K = K(n,s) \geq 1$ is the constant that appears in the boundary Harnack inequality in Lemma \ref{a2s1lemma5}.

\medskip

Now, up to a translation we can assume that the critical hyperplanes with respect to the $N$ coordinate directions  $T^{e_j}$ coincide with $\{ x_j = 0 \}$ for each $j=1, \dots , N$, that is, they all intersect at the origin. 

\medskip

The idea is then the following: for a given direction $e \in \mathcal{S}^{n-1}$ we slice $\Omega_{\lambda_e}$ in (a finite number of) sections depending on the critical value $\lambda_e$ and consider their measure, namely
\begin{equation*}
    m_k := | \, \{ x \in \Omega \ | \ (2k-1) \, \lambda_e \leq x \cdot e \leq (2k+1) \, \lambda_e \} \, |, \quad \text{for } k\geq 1.
\end{equation*}

Since $\Omega$ is bounded, $m_k > 0$ only up to an index $k_0$ which behaves like the inverse of $\lambda_e$. The key observation is that, by reflecting with respect to the origin and using \eqref{a2s3eq49}, one has
\begin{equation}
\label{a2s3eq50}
    m_1 = | \, \{ x \in \Omega \ | \ -\lambda_e \leq x \cdot e \leq \lambda_e \} \, | \leq (n+3) \, \overline{C} \, [u]_{\partial G}^{\frac{1}{s+2}};
\end{equation}
moreover, by the moving plane procedure, $m_k \leq m_1$ for every $k$ up to $k_0$ and therefore one can then write the expression 
\begin{equation}
\label{unaltraequazione}
    |\Omega_{\lambda_e}| \leq \sum_{k=1}^{k_0} m_k \leq k_0 \, m_1 \leq (n+3) \, \diam(\Omega) \, \overline{C} \, \frac{1}{\lambda_e} \, [u]_{\partial G}^{\frac{1}{s+2}}.
\end{equation}

Inequalities \eqref{unaltraequazione} and some further calculations (see section 4 in \cite{ciraolo2021symmetry}) yield
\begin{equation}
\label{a2s3eq52}
    |\lambda_e| \leq 4 \, (n+3) \, \frac{\diam(\Omega)}{|\Omega|} \, \overline{C} [u]_{\partial G}^{\frac{1}{s+2}}.
\end{equation}

Now it remains to establish a relationship between $|\lambda_e|$ and $\rho(\Omega)$. We set $\rho_{min}:= \min_{z \in \partial \Omega} |z|$, $\rho_{max}:= \max_{z \in \partial \Omega} |z|$ and choose $x, y \in \partial \Omega$ such that $|x| = \rho_{min}$ and $|y| = \rho_{max}$. We then consider the unit vector
\begin{equation*}
    e := \frac{x - y}{|x - y|}
\end{equation*}
and the corresponding critical hyperplane $T^e$. By construction, we know that $\dist (x,T^e) \geq \dist (y,T^e)$ and therefore some simple calculations lead to 
\begin{equation}
\label{a2s3eq53}
    \rho(\Omega) \leq \rho_{max} - \rho_{min} = |y| - |x| \leq 2 \, \dist (0,T^e) = 2 |\lambda_e|.
\end{equation}

Combining \eqref{a2s3eq52} and \eqref{a2s3eq53} leads to \eqref{a2s1eq8}. It is worth pointing out that the new constants appearing in \eqref{a2s3eq50} and onward only depend on the dimension $n$, the diameter $\diam(\Omega)$ and the volume $|\Omega|$.
\end{proof}

\section{Annular sets}
In this section we consider annular domains and prove Theorems \ref{a2theorem3} and \ref{a2theorem4}. The strategy that we use is still via the method of moving planes and it is similar to the previous one; nevertheless, the method has to be carefully adapted to this situation. We recall that we are considering solutions to \eqref{a2s1eq12}, where $A=\Omega \setminus \overline D$, with $\overline D \subset \Omega$ bounded open domains.

\bigskip

Now for a fixed direction $e$ and a parameter $\lambda \in \mathbb{R}$ we let $T_\lambda$, $H_\lambda$, $\mathcal Q_\lambda$ be as in the previous section. We now consider 
$$
\Sigma_\lambda := (\Omega \cap H_\lambda) \setminus \mathcal Q_\lambda(\overline{D})
$$
which is the cap of the annulus $\Omega \setminus D$. Moreover, for a given set $E$, we define 
\begin{align*}
    &d_{E}:= \inf \{ \lambda \in \mathbb{R} \ | \ T_\mu \cap \overline{E} = \emptyset  \}\\
    &\overline{\lambda}_E := \inf \{ \lambda \leq d_E \ | \ \text{for every } \ \mu > \lambda, (\overline{E} \cap H_\mu)^\mu \subset ( E \cap H_\mu^\mu ) \  \text{ and } \ \nu(x) \cdot e > 0 \ \forall x \in T_\mu \cap \partial E \},
\end{align*}
and the critical parameter $\overline \lambda$ is given by
\begin{equation*}
\overline{\lambda} := \max \{ \overline{\lambda}_D, \ \overline{\lambda}_\Omega  \}.
\end{equation*}

We mention that  both the function $u$ and its reflection $u'$ are $s$-harmonic in $\Sigma_\lambda$, as we are going to use in in the proof of Theorem \ref{a2theorem3}. We also notice that, thanks to our choices, $\overline{\lambda}$ is the critical value for $A$ with respect to the direction $e$, and now the critical position can occur in four possible cases (namely, Cases 1 and 2 in Subsection \ref{subsect_MMP} for both $D$ and $\Omega$).

\medskip

In order to avoid further technicalities we ask for the domains $D$ and $\Omega$ to be regular (namely, with boundaries $\partial G$ and $\partial \Omega$ of class $C^2$); the proof works in the same way if we instead assume that $\partial A$ is just of class $C^1$, and $\Gamma_R^G$ and $\Gamma_R^\Omega$ of class $C^2$.

\medskip

\subsection{Symmetry result}

With this setting, we are now ready to give a proof of the symmetry result for annular sets.

\begin{proof}[Proof of Theorem \ref{a2theorem3}]
We fix a direction $e=e_1$ and reach the critical value $\overline{\lambda}$. Without loss of generality, we assume that $T = \{ e_1 = 0 \}$ and define the function $w(x) := u(x) - u(x')$ for every $x \in \mathbb{R}^n$. To simplify the notation we set $\mathcal{Q} = \mathcal{Q}_{\overline{\lambda}}$. Our aim is to show that $w$ is actually identically zero in $\mathcal Q (\Sigma_{\overline{\lambda}})$. This implies that both the function $w$ and the set $A$ itself are symmetric with respect to direction $e$; since the direction $e$ can be chosen arbitrarily, the proof is then complete.

Hence we have to show that $w \equiv 0$ in $\mathcal Q(\Sigma_{\overline{\lambda}})$. We notice that the function $w$ is antisymmetric with respect to $e=e_1$ and
\begin{align*}
    (-\Delta)^s w(x) = 0 \qquad \qquad &\text{for} \ x \in \mathcal Q(\Sigma_{\overline{\lambda}}),\\
    w(x) = u(x) - u(x') = 1 - u(x') \geq 0 \qquad \qquad &\text{for} \ x \in \overline{D} \cap \mathcal Q(\widehat{\Omega}),\\
    w(x) = u(x) - u(x') = u(x) \geq 0 \qquad \qquad &\text{for} \ x \in \Omega^- \setminus \mathcal Q(\widehat{\Omega}),\\
    w(x) = 0 \qquad \qquad &\text{for} \ x \in H^- \setminus \Omega^-.
\end{align*}
In particular, the last three inequalities tell us that $w \geq 0$ in $H^- \setminus \mathcal Q(\Sigma_{\overline{\lambda}})$. The weak maximum principle for antisymmetric solutions in Lemma \ref{a2s1lemma1} implies that $w \geq 0$ in $\mathcal Q(\Sigma_{\overline{\lambda}})$; then, from the strong maximum principle in Lemma \ref{lemma_SMP} we get that either $w > 0$ in $\mathcal Q(\Sigma_{\overline{\lambda}})$ or $w \equiv 0$ in $\mathcal Q(\Sigma_{\overline{\lambda}})$.

In order to conclude, we notice that in this case we have four possible critical cases, all of which can be treated as in the proof of Theorem \ref{a2s1theorem1}. The conclusion then follows straightforwardly. 
\end{proof}

\subsection{Almost symmetry in one direction}

In order to prove almost symmetry in one direction for the annular set, we need to make use again of the quantitative Hopf's type lemma (\cite[Lemma 4.1]{ciraolo2021symmetry}) and adapt it to the current problem. 

We start with a lemma which gives the behaviour of the solution $u$ of \eqref{a2s1eq12} inside the annulus $A$  with respect to the distance from the boundary. We start with a simple remark.

\begin{remark}
If $u \in C^s (\mathbb{R}^n)$ solves \eqref{a2s1eq12} with $\partial A \in C^1$, then we have
\begin{equation}
\label{a2s4eq3}
    0 < u < 1 \qquad \text{in} \ A.
\end{equation}
\end{remark}
Indeed, applying the maximum principles for an $s$-harmonic function in $A$ we get that $u$ has to be strictly positive in $A$. By using the same argument for $\Tilde{u} := 1 - u$ we get the latter part of \eqref{a2s4eq3}.

\bigskip

We have the following lemma.

\begin{lemma}
\label{a2s4lemma4}
Under the assumptions of Theorem \eqref{a2theorem4}, let $u$ be a solution of \eqref{a2s1eq12}; then
\begin{equation}
\label{a2s4eq36}
    \min \{u, 1 - u \} (x) \geq C^* \, (\dist (x,\partial A))^s \quad \text{in} \ A,
\end{equation}
where
\begin{equation*}
    C^* :=  \frac{c_{n,s} \, \gamma_{n,s}}{4^{n+2s+1}} \, \frac{|D| \, \min \{ \mathfrak{r}^i_\Omega, \mathfrak{r}^e_D, \overline{d}/2\}^s}{\diam (\Omega)^{n+2s}}.
\end{equation*} 
\end{lemma}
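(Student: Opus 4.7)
The plan is to apply the quantitative Hopf-type Lemma \ref{a2s3theorem1} separately to $u$ and to $v := 1 - u$. Both functions are $s$-harmonic in $A$ and nonnegative on $\mathbb R^n \setminus A$: indeed $u = 1$ on $\overline D$ and $u = 0$ on $\mathbb R^n \setminus \Omega$, and these boundary values simply swap for $v$. It therefore suffices, for each $x \in A$, to exhibit a ball $B \subset A$ containing $x$ and a compact ``source'' set $K$ with $\dist(K,B) > 0$ on which the relevant function is bounded below by $1$; the conclusion will then be pure bookkeeping.

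Setting $r := \min\{\mathfrak r^i_\Omega,\, \mathfrak r^e_D,\, \overline d/2\}$, I choose the ball $B$ as follows. Let $d := \dist(x,\partial A)$. If $d \geq r$, I take $B = B_d(x) \subset A$, so that $\psi_B(x) = \gamma_{n,s}\, d^{2s} \geq \gamma_{n,s}\, r^s d^s$. If instead $d < r$, I let $y_0 \in \partial A$ be the projection of $x$ and take $B$ to be the ball of radius $r$ inside $A$ tangent to $\partial A$ at $y_0$: the interior touching ball to $\Omega$ if $y_0 \in \partial \Omega$, or the exterior touching ball to $D$ if $y_0 \in \partial D$. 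The condition $r \leq \overline d/2$ ensures $B \subset A$ in both cases, and since $x$ lies on the inward normal at $y_0$ a direct computation yields $\psi_B(x) \geq \gamma_{n,s}\, r^s d^s$.

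For the bound on $u$, the natural choice is $K := \overline D$, on which $u \equiv 1$. Because $K$ and $B$ both lie in $\overline\Omega$ and the radius of $B$ is at most $\diam(\Omega)$, each of $\diam(K)$, $\dist(K,B)$ and the diameter of $B$ is bounded by $\diam(\Omega)$, so the constant $C_H$ in \eqref{CHopf} is at least $c_{n,s}\,|D|/(c\,\diam(\Omega))^{n+2s}$ for a universal $c$. Plugging in the estimate for $\psi_B(x)$ gives $u(x) \geq C^* d^s$ of the claimed form.

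For the bound on $v$, the source set must lie where $v > 0$, i.e., in $\mathbb R^n \setminus \Omega$. I take $\widetilde D$ to be a translate of $\overline D$ placed inside $\mathbb R^n \setminus \Omega$ at distance at most a multiple of $\diam(\Omega)$ from $\Omega$; such a translate exists because $\diam(D) \leq \diam(\Omega)$ and $\mathbb R^n\setminus\Omega$ is unbounded, and it preserves $|\widetilde D| = |D|$ and $\diam(\widetilde D) \leq \diam(\Omega)$. Running Lemma \ref{a2s3theorem1} for $v$ with $K = \widetilde D$ yields the same estimate $v(x) \geq C^* d^s$, and taking the minimum of the two bounds proves the lemma. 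The main technicality I anticipate is ensuring the strict separation $\dist(K,B) > 0$ when the touching ball $B$ abuts $\overline D$ in the $u$-case, which forces a slight shrinkage of either $B$ or $K$ (replace $\overline D$ by $\{y \in D : \dist(y,\partial D) > \varepsilon\}$) and is absorbed into the extra factor $4$ in the denominator $4^{n+2s+1}$ of $C^*$.
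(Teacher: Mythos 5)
Your proposal follows essentially the same approach as the paper's proof: apply the quantitative Hopf-type Lemma~\ref{a2s3theorem1} separately to $u$ and $v=1-u$, with a compact ``source'' set inside $D$ (resp.\ inside $\mathbb{R}^n\setminus\overline\Omega$), split into the two cases $d \geq r$ (ball centered at $x$) and $d < r$ (sub-ball of the touching ball, kept inside $A$ by the condition $r \le \overline d/2$), and track constants. One small remark: the separation issue $\dist(K,B)>0$ that you flag is not restricted to the touching-ball case --- it can also arise with $B=B_d(x)$ when $\dist(x,\partial D)=\dist(x,\partial A)$ --- and the paper sidesteps it uniformly by fixing once and for all a compact $K_1\subset D$ with $|K_1|=|D|/4$ (so $\dist(K_1,\partial D)>0$ is automatic), rather than shrinking $\overline D$ by $\varepsilon$; this is where the extra factor $4$ in $4^{n+2s+1}$ actually originates. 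The remaining differences (a single radius $r=\min\{\mathfrak r^i_\Omega,\mathfrak r^e_D,\overline d/2\}$ versus the paper's separate $r_1,r_2$; a translate $\widetilde D$ with $|\widetilde D|=|D|$ versus the paper's $K_2$ with $|K_2|=|D|/4$) are cosmetic and only affect the universal constant.
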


\begin{proof}
We first prove \eqref{a2s4eq36} for the function $u$. This inequality follows by an application of Lemma \ref{a2s3theorem1} and therefore we fix a compact set $K_1 \subset D$ such that $|K_1| = |D|/4$. Since $K_1$ is compact and $D$ is open, then $\dist(K_1, \partial D) > 0$.

Let $x \in A$. Assume $\dist(x,\partial A) = \dist (x, \partial \Omega)$ and let $\overline{x} \in \partial \Omega$ be such that $\dist(x,\partial \Omega) = \dist(x,\overline{x}) =: r$. We apply Lemma \ref{a2s3theorem1} with $K=K_1$ and $B=B_r(x)$ and get
\begin{equation}
\label{a2s4eq71}
    u \ge c_{n,s} \, \frac{|K_1| \, \inf_{K_1} u}{(2r + \dist(K_1,B_r(x)) + \diam(K_1))^{n+2s}} \, \psi_{B_r(x)} \qquad \text{in} \ B_r (x).
\end{equation}

Since $u=1$ in $K_1$ and 
$$
2r + \dist(K_1,B_r(x)) + \diam(K_1) \leq 4 \diam(\Omega) \,,
$$ 
by evaluating \eqref{a2s4eq71} at $x$ we have
\begin{equation}
\label{a2s4eq76}
    u(x) \ge \frac{c_{n,s} \, \gamma_{n,s}}{4^{n+2s+1}} \, \frac{|D|}{\diam (\Omega)^{n+2s}} \, r^{2s} =  \frac{c_{n,s} \, \gamma_{n,s}}{4^{n+2s+1}} \, \frac{|D|}{\diam (\Omega)^{n+2s}} \, \dist(x,\partial \Omega)^{2s}.
\end{equation}

Up until now we didn't make use of the interior radius of the touching ball condition $\mathfrak{r}^i_\Omega > 0$. If $\dist (x,\partial \Omega) \ge \mathfrak{r}^i_\Omega$ the equation \eqref{a2s4eq76} immediately gives \eqref{a2s4eq36}. If instead $\dist (x,\partial \Omega) < \mathfrak{r}^i_\Omega$ we set $r_1:= \min \{ \mathfrak{r}^i_\Omega, \, \overline{d}/2 \}$, $\tilde{x} \in A$ such that $\overline{x} \in \partial \Omega \cap \partial B_{r_1} (\tilde{x})$ and apply Lemma \ref{a2s3theorem1} for $K = K_1$ and $B = B_{r_1}(\tilde{x})$ to get
\begin{equation}
\label{a2s4eq79}
    u(x) \ge \frac{c_{n,s} \, \gamma_{n,s}}{4^{n+2s+1}} \, \frac{|D|}{\diam (\Omega)^{n+2s}} \, (r_1 + |x - \tilde{x}|)^s \, (r_1 - |x - \tilde{x}|)^s,
\end{equation}
which together with the fact that $r_1 - |x - \tilde{x}| = \dist(x, \partial \Omega)$ gives \eqref{a2s4eq36}.

\medskip

Assuming now $\dist(x,\partial A) = \dist (x, \partial D)$ we can now repeat the same arguments with $r_2:= \min \{ \mathfrak{r}^e_D, \, \overline{d}/2 \}$ in place if $r_1$ and we obtain \eqref{a2s4eq79} with $r_1$ replaced by $r_2$.

\bigskip

The proof of \eqref{a2s4eq36} for $v:= 1 - u$ can be carried out in the same way; we only have to fix a compact set $K_2 \subset \mathbb{R}^n \setminus \overline \Omega$ such that 
$$
\dist(K_1,B_r(x)) + \diam(K_1) \leq 2 \diam(\Omega) \,,
$$ 
and 
$|K_2| = |D|/4$, and then apply Lemma \ref{a2s3theorem1} with $K = K_2$ as done before. 
\end{proof}

We are now ready to state a version of Lemma \ref{a2s3lemma1} for the annulus, under the assumptions of Theorem \ref{a2theorem4}. The ball $B$ will be chosen inside of a set where the antisymmetic function $w:= u - u'$ is $s$-harmonic (this time, it will be $Q(\Sigma_{\overline{\lambda}})$); in this case, the compact set $K$ consists of two components, in such a way that we can take into account the symmetric differences between the sets $\Omega$ and $G$ and their respective reflections via the moving plane method. We define
\begin{align*}
    &L := \Omega^- \setminus \mathcal Q(\widehat{\Omega}) \\
    &M := D^- \setminus \mathcal Q(\widehat{D})\\
    &\Tilde{K} := L \cup (M \cap \mathcal Q(\widehat{\Omega})).
\end{align*}

We have the following lemma.

\begin{lemma}
Given $P \in \mathcal Q(\Sigma_{\overline{\lambda}})$ with $B=B_{R/8} (P)$ such that $\mathrm{dist}(B, \mathcal Q(\Sigma_{\overline{\lambda}})) \ge R/8$ and given $\delta > 0$, we have that
\begin{equation}
\label{a2s4eq52}
|\Tilde{K}| \leq \Tilde{C} \, (\delta^{-(1+s)} v(P) + \delta),
\end{equation}
where $\Tilde{C} > 0$ is a constant depending only on $n$, $s$, $R$, $\mathfrak{r}^e_{D}$, $\mathfrak{r}^i_{\Omega}$, $\diam(\Omega)$, $|D|$ and $|\Omega|$.
\end{lemma}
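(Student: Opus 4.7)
The plan is to adapt the proof of Lemma \ref{a2s3lemma1} to the annular setting. The key new feature is that the antisymmetric function $v(x) := u(x) - u(x')$ takes two different forms on the two components of $\tilde{K}$: on $L = \Omega^- \setminus \mathcal Q(\widehat \Omega)$ one has $u(x')=0$ (since $x \in L$ forces $x' \notin \Omega$), so $v = u$; on $M \cap \mathcal Q(\widehat \Omega)$ one has $u(x)=1$ (since $x \in D$) and $x' \in \widehat \Omega \setminus \widehat D \subset A$, so $v(x) = 1 - u(x')$. This forces a more delicate choice of the ``bad'' sets to be removed from $\tilde{K}$ before the quantitative antisymmetric Hopf lemma can be applied.

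Concretely, I would set
$$F_\delta := \{x \in \tilde{K} \ | \ \dist(x,T) < \delta\}$$
to keep the good set away from the critical hyperplane $T$, and
$$E_\delta := \{x \in L \cap A \ | \ \dist(x,\partial A) < \delta\} \cup \{x \in M \cap \mathcal Q(\widehat \Omega) \ | \ \dist(x',\partial A) < \delta\}$$
to remove the points on which $v$ cannot be controlled from below, then put $K_\delta := \tilde{K} \setminus (E_\delta \cup F_\delta)$. The pointwise bound $v \geq C^* \delta^s$ on $K_\delta$ would then follow from a case analysis: on $L \cap D$ it is trivial because $v \equiv 1$; on $L \cap A$ it follows from Lemma \ref{a2s4lemma4} applied to $u$; on $M \cap \mathcal Q(\widehat \Omega)$ it follows from Lemma \ref{a2s4lemma4} applied to $1-u$ at the point $x'$.

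With this lower bound in hand, I would apply Lemma \ref{s3lemma2} to $v$ with ball $B = B_{R/8}(P)$ and compact set $K_\delta$; the function $v$ is antisymmetric, $s$-harmonic on $\mathcal Q(\Sigma_{\overline{\lambda}}) \supset B$ (by the distance hypothesis on $B$) and nonnegative on $H^-$ via Lemma \ref{a2s1lemma1}, exactly as used in the proof of Theorem \ref{a2theorem3}. Using $\dist(K_\delta, H^+) \geq \delta$, $\inf_{K_\delta} v \geq C^* \delta^s$ and $\psi_B(P) = \gamma_{n,s} (R/8)^{2s}$, this produces
$$v(P) \geq C' \, \delta^{1+s} \, |K_\delta|,$$
with $C'$ depending on $n,s,R,\mathfrak{r}^e_D,\mathfrak{r}^i_\Omega,\diam(\Omega)$ and $|D|$. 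The final ingredient is the measure estimate $|K_\delta| \geq |\tilde{K}| - \tilde{c}\delta$, obtained by combining the slab bound $|F_\delta| \leq \diam(\Omega)^{n-1}\delta$ with the tubular-neighborhood estimate (Lemma 5.2 of \cite{ciraolo2021symmetry}) applied to the $C^2$ hypersurfaces $\partial D$ and $\partial \Omega$ separately; the second piece of $E_\delta$ is handled by the change of variables $y = x'$, which turns it into an ordinary $\delta$-tube around $\partial A$. Rearranging then yields \eqref{a2s4eq52}.

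The step I expect to be the main obstacle is the pointwise lower bound for $v$ on the component $M \cap \mathcal Q(\widehat\Omega)$ of $\tilde{K}$, since there $v(x) = 1 - u(x')$ and one needs $x'$ to stay $\delta$-away from \emph{both} the inner boundary $\partial D$ and the outer boundary $\partial \Omega$. This is precisely why the second piece of $E_\delta$ is defined via $\dist(x',\partial A)$ rather than $\dist(x',\partial D)$ alone, and it is essential that Lemma \ref{a2s4lemma4} provides a boundary-decay estimate for $1-u$ that is uniform across both components of $\partial A$, with the constant $C^*$ depending explicitly on $\mathfrak{r}^e_D$ and $\mathfrak{r}^i_\Omega$; this is the annular analogue of the role played by Lemma \ref{lemma_conticino} in the exterior case.
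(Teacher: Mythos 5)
Your proposal follows the paper's proof of this lemma essentially step for step: the same decomposition $K_\delta = \tilde K \setminus (E_\delta \cup F_\delta)$, the same quantitative antisymmetric Hopf lemma (Lemma \ref{s3lemma2}) applied to $v$ on the ball $B=B_{R/8}(P)$, the same boundary-decay estimate (Lemma \ref{a2s4lemma4}) supplying $\inf_{K_\delta} v \geq C^*\delta^s$, and the same tubular-neighborhood and slab bounds for $|E_\delta \cup F_\delta| \lesssim \delta$. Your explicit case analysis of $v$ on $L\cap D$, $L\cap A$, and $M \cap \mathcal Q(\widehat\Omega)$, together with the split definition of $E_\delta$ via $\dist(x,\partial A)$ on the first component and $\dist(x',\partial A)$ on the second, matches the intent of the paper's (more tersely written) choice of $E_\delta$ and, if anything, makes the argument more transparent.
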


\begin{proof}
We set $K_\delta := \Tilde{K} \setminus (E_\delta \cup F_\delta)$ where 
\begin{align*}
    &E_\delta := [ \, \{ x \in \Omega \ | \ \dist(x,\partial \Omega) < \delta \} \cap (\Omega^- \cup \mathcal Q(\widehat{\Omega}) ) \, ] \, \cup \, [ G^- \setminus \mathcal Q(\widehat{G} + B_\delta) ) \, ],\\
    &F_\delta := \{ x \in \Omega \ | \ \dist (x,H^+) < \delta \}.
\end{align*}
We apply Lemma \ref{s3lemma2} with $B := B_{R/8} (P)$ and $K := K_\delta$ to get \eqref{s3eq19}, i.e.

\begin{equation}
\label{a2s4eq55}
	v \geq \overset{\star}{C} \,  \big[  \mathrm{dist} (K_\delta ,H^+) \, | K_\delta | \, \inf_{K_\delta} v \big] \psi_B \quad \text{in} \ B.
\end{equation}

By arguing as done for \eqref{a2s3eq43}, \eqref{a2s3eq44} and \eqref{a2s3eq45} we get that
\begin{equation}
\label{a2s4eq56}
    |\tilde{K}_\delta| \geq |\Omega^- \setminus \mathcal Q(\widehat{\Omega})| + |G^- \setminus \mathcal Q(\widehat{G})| - \tilde{c}\, \delta \,,
\end{equation}
and plugging \eqref{a2s4eq56} into \eqref{a2s4eq55}, together with Lemma \ref{a2s4lemma4} and the fact that $\dist(\tilde{K}_\delta, H^+) \geq \delta$ we get \eqref{a2s4eq52}.
\end{proof}

The way to use Lemma \ref{a2s4lemma4} to establish almost symmetry in one direction and then prove Theorem \ref{a2theorem4} is again the one sketched in the proof of Theorem \ref{a2theorem2}. We just need to highlight some minor differences with the annular case, that we report below.

\begin{proof}
The first goal is to obtain the almost symmetry in one direction from \eqref{a2s4eq52}. While in the proof of Theorem \ref{a2theorem2} we need to take into account the two possible critical Cases 1 and 2 for the moving plane method, with the first one being further divided into cases 1a and 1b, now the critical position can be reached for both the set $D$ and the set $\Omega$, resulting in a total of six possible critical cases. Nonetheless, they are tackled in the same exact way; the only thing that we need to point out is that in each of the critical cases we can write
\begin{equation}
\label{a2s4eq58}
    v(P) \leq  c_\star  \max \{ [u]_{\Gamma^R_{\Omega}}, [u]_{\Gamma^R_{G}} \} = c_\star \, \mathrm{def}_A(u) \,,
\end{equation}
where $c_\star := \max\{1, \diam(\Omega), K \, R/2 \}$.
From \eqref{a2s4eq58} we can then recover the inequality
\begin{equation}
    |\Omega^- \setminus \mathcal Q(\widehat{\Omega})| + |G^- \setminus \mathcal Q(\widehat{G})| \leq \overline{C} \, \mathrm{def}_A(u)^{\frac{1}{s+2}},
\end{equation}
where $\overline{C} = c_\star \tilde{C}$. The slicing of the two sets can then be performed in the same way, which leads to an estimate of type
\begin{equation}
\label{a2s4eq60}
    |\lambda_e| \leq 4 \, (n+3) \frac{\diam(\Omega)}{|\Omega|} \, \overline{C} \, \mathrm{def}_A(u)^{\frac{1}{s+2}},
\end{equation}
where now again the bound depends on the seminorms on both of the parallel surfaces. We now only need to make sure that formula \eqref{a2s3eq53} still applies. Again, for the set $\Omega$ we define $\rho_{min}:= \min_{z \in \partial \Omega} |z|$, $\rho_{max}:= \max_{z \in \partial \Omega} |z|$, choose $x, y \in \partial \Omega$ such that $|x| = \rho_{min}$ and $|y| = \rho_{max}$ and consider the direction $e= y - x$ up to normalization with its critical hypeplane $T^e$. Since we are now in the annular case $\overline{\lambda}^e = \max \{ \overline{\lambda}^e_D, \overline{\lambda}^e_\Omega \}$ and therefore the moving plane might stop before reaching the cricial position for the set $\Omega$ itself. However we can still write
\begin{equation}
\label{a2s4eq62}
    \rho(\Omega) \leq \rho_{max} - \rho_{min} = |y| - |x| \leq 2 \, \dist (0,T^e) = 2 |\overline{\lambda}^e| \leq 2 \, |\overline{\lambda}^e_\Omega|.
\end{equation}
Combining \eqref{a2s4eq60} and \eqref{a2s4eq62} and repeating the same argument for $G$ lead us to \eqref{a2s1eq18}.
\end{proof}

\bibliographystyle{alpha}
\bibliography{biblio}

\newcommand{\etalchar}[1]{$^{#1}$}
\begin{thebibliography}{CFMN18}

\bibitem[ABR99]{aftalion1999approximate}
Amandine Aftalion, J\'{e}r\^{o}me Busca, and Wolfgang Reichel.
\newblock Approximate radial symmetry for overdetermined boundary value
  problems.
\newblock {\em Adv. Differential Equations}, 4(6):907--932, 1999.

\bibitem[CDP{\etalchar{+}}23]{ciraolo2021symmetry}
Giulio Ciraolo, Serena Dipierro, Giorgio Poggesi, Luigi Pollastro, and Enrico
  Valdinoci.
\newblock Symmetry and quantitative stability for the parallel surface
  fractional torsion problem.
\newblock {\em To appear in Trans. Amer. Math. Soc.}, 2023.

\bibitem[CFMN18]{ciraolo2018rigidity}
Giulio Ciraolo, Alessio Figalli, Francesco Maggi, and Matteo Novaga.
\newblock Rigidity and sharp stability estimates for hypersurfaces with
  constant and almost-constant nonlocal mean curvature.
\newblock {\em J. Reine Angew. Math.}, 741:275--294, 2018.

\bibitem[CMS15]{CMS}
Giulio Ciraolo, Rolando Magnanini, and Shigeru Sakaguchi.
\newblock Symmetry of minimizers with a level surface parallel to the boundary.
\newblock {\em J. Eur. Math. Soc. (JEMS)}, 17(11):2789--2804, 2015.

\bibitem[CMS16]{ciraolo2016solutions}
Giulio Ciraolo, Rolando Magnanini, and Shigeru Sakaguchi.
\newblock Solutions of elliptic equations with a level surface parallel to the
  boundary: stability of the radial configuration.
\newblock {\em J. Anal. Math.}, 128:337--353, 2016.

\bibitem[CV18]{CiraoloVezzoni}
Giulio Ciraolo and Luigi Vezzoni.
\newblock A sharp quantitative version of {A}lexandrov's theorem via the method
  of moving planes.
\newblock {\em J. Eur. Math. Soc. (JEMS)}, 20(2):261--299, 2018.

\bibitem[DNPV12]{di2012hitchhiker}
Eleonora Di~Nezza, Giampiero Palatucci, and Enrico Valdinoci.
\newblock Hitchhiker's guide to the fractional {S}obolev spaces.
\newblock {\em Bull. Sci. Math.}, 136(5):521--573, 2012.

\bibitem[FJ15]{fall2015overdetermined}
Mouhamed~Moustapha Fall and Sven Jarohs.
\newblock Overdetermined problems with fractional {L}aplacian.
\newblock {\em ESAIM Control Optim. Calc. Var.}, 21(4):924--938, 2015.

\bibitem[GS16]{greco2016hopf}
Antonio Greco and Raffaella Servadei.
\newblock Hopf’s lemma and constrained radial symmetry for the fractional
  laplacian.
\newblock {\em Mathematical Research Letters}, 23(3):863--885, 2016.

\bibitem[ROS14]{ros2014dirichlet}
Xavier Ros-Oton and Joaquim Serra.
\newblock The {D}irichlet problem for the fractional {L}aplacian: regularity up
  to the boundary.
\newblock {\em J. Math. Pures Appl. (9)}, 101(3):275--302, 2014.

\bibitem[Ser71]{serrin1971symmetry}
James Serrin.
\newblock A symmetry problem in potential theory.
\newblock {\em Arch. Rational Mech. Anal.}, 43:304--318, 1971.

\bibitem[SV19]{soave2019overdetermined}
Nicola Soave and Enrico Valdinoci.
\newblock Overdetermined problems for the fractional laplacian in exterior and
  annular sets.
\newblock {\em Journal d'Analyse Math{\'e}matique}, 137(1):101--134, 2019.

\bibitem[War15]{warma2015fractional}
Mahamadi Warma.
\newblock The fractional relative capacity and the fractional laplacian with
  neumann and robin boundary conditions on open sets.
\newblock {\em Potential Analysis}, 42(2):499--547, 2015.

\end{thebibliography}

\end{document}